\newtheorem{theorem}{Theorem}[section]
\newtheorem{defn}[theorem]{Definition}
\newtheorem{lemma}[theorem]{Lemma}
\newtheorem{coro}[theorem]{Corollary}
\newtheorem{prop-def}{Proposition-Definition}[section]
\newtheorem{exam}[theorem]{Example}
\newcommand{\nc}{\newcommand}
\newcommand{\delete}[1]{}
\nc{\mlabel}[1]{\label{#1}}  
\nc{\mcite}[1]{\cite{#1}}  
\nc{\mref}[1]{\ref{#1}}  
\nc{\mbibitem}[1]{\bibitem{#1}} 
\nc{\mlabel}[1]{\label{#1}  
{\hfill \hspace{1cm}{\bf{{\ }\hfill(#1)}}}}
\nc{\mcite}[1]{\cite{#1}{{\bf{{\ }(#1)}}}}  
\nc{\mref}[1]{\ref{#1}{{\bf{{\ }(#1)}}}}  
\nc{\mbibitem}[1]{\bibitem[\bf #1]{#1}} 
\nc{\bfk}{\mathbf{k}}
\nc{\Der}{\mathrm{Der}}
\nc{\Ker}{\mathrm{Ker}}
\begin{document}

\title{n-Lie bialgebras }

\author{RuiPu  Bai}
\address{College of Mathematics and Information  Science,
Hebei University, Baoding 071002, China} \email{bairuipu@hbu.edu.cn}

\author{Weiwei Guo}
\address{College of Mathematics and Information  Science,
Hebei University, Baoding 071002, China} \email{hubguoweiwei@163.com}

\author{Lixin Lin}
\address{College of Mathematics and Information  Science,
Hebei University, Baoding 071002, China} \email{llxhebeidaxue@163.com}

\author{Yan Zhang}
\address{College of Mathematics and Information  Science,
Hebei University, Baoding 071002, China} \email{zhycn0913@163.com}

\date{\today}

\begin{abstract} The  $n$-Lie bialgebras are studied. In Section 2, the $n$-Lie coalgebra with  rank $r$ is defined, and the structure of it is discussed. In Section 3,  the $n$-Lie bialgebra is introduced. A triple $(L, \mu, \Delta)$ is an $n$-Lie bialgebra if and only if $\Delta$ is a conformal $1$-cocycle on the $n$-Lie algebra $L$ associated to $L$-modules $(L^{\otimes n}, \rho_s^{\mu})$,  $1\leq s\leq n$, and  the structure of $n$-Lie bialgebras is investigated by the structural constants. In Section 4, two-dimensional extension of finite dimensional $n$-Lie bialgebras are studied. For an $m$ dimensional
$n$-Lie bialgebra $(L, \mu, \Delta)$,  and an $ad_{\mu}$-invariant symmetric bilinear form on $L$, the $m+2$ dimensional   $(n+1)$-Lie bialgebra is constructed. In the last section,
the bialgebra structure on the finite dimensional simple $n$-Lie algebra $A_n$ is discussed. It is proved that only    bialgebra structures on the  simple $n$-Lie algebra $A_n$ are rank zero, and rank two.

\end{abstract}

\subjclass[2010]{17B05, 17D99.}

\keywords{ $n$-Lie algebra, $n$-Lie bialgebra,   $n$-Lie coalgebra, two-dimensional extension}

\maketitle



\allowdisplaybreaks

\section{Introduction}

For a given algebraic structure determined by a set of multiplications and a set of relations among the operations, a bialgebra structure on this algebra is obtained
by a corresponding set of comultiplications together with a set of compatibility conditions between the multiplications and comultiplications \cite{Gbook, Vbook}.
A Lie bialgebra \cite{G} consists of a Lie algebra $(L, [ , ])$ and a Lie
 coalgebra $ (L, \Delta)$,  where $ [ , ]: L \wedge L \rightarrow L$ is a Lie bracket,  $\Delta: L\rightarrow L \wedge L$  is a Lie comultiplication, and  the compatibility
condition between the Lie bracket $[  , ,]$ and the Lie comultiplication $\Delta$ is
$$
\Delta([x, y])=(ad x\otimes 1+1\otimes ad x)\Delta(y)-(ad y\otimes 1+1\otimes ad y)\Delta(x), ~~ \forall x, y\in L.
$$

We know that Lie bialgebra is very important, one reason for it  is the Lie bialgebra has a coboundary theory, which
leads to that  the construction of Lie bialgebras has close relation with the solutions of the classical Yang-Baxter equation.

In paper \cite{BGSHh}, authors  studied  local cocycle 3-Lie bialgebras $(A, \mu, \Delta)$, and   defined 3-Lie classical Yang-Baxter equation (3-Lie CYBE)
$$
[ [r, r, r] ]=0, \quad r=\sum_i x_i\otimes y_i\in A\otimes A. $$
From a solution  $r\in A \otimes  A$,  a class of local cocycle 3-Lie bialgebras can be constructed.

Let  $r=\sum\limits_i x_i\otimes y_i\in A\otimes A$ be  a solution of  3-Lie classical Yang-Baxter equation associated to $3$-Lie algebra $(A, \mu)$, then $(L, \mu, \Delta)$ is a local cocycle 3-Lie bialgebra,
where $\Delta: A \rightarrow A\otimes A \otimes A$ defined by for all $x\in A$,
$$
\Delta(x)=\Delta_1(x)+\Delta_2(x)+\Delta_3(x),
$$
where
$\Delta_1(x)=\sum\limits_{ij}([x, x_i, x_j]\otimes y_j\otimes y_i,$  $\Delta_2(x)=\sum\limits_{ij}y_i\otimes [x, x_i, x_j]\otimes y_j,$ $\Delta_3(x)=\sum\limits_{ij}y_j\otimes y_i \otimes [x, x_i, x_j].$

Also Manin triples and matched pairs of $3$-Lie algebras and double construction $3$-Lie bialgebras are introduced in  \cite{BGSHh}. It is proved that the double construction
$3$-Lie bialgebras can be regarded as a special class of  local cocycle $3$-Lie bialgebras.

In paper \cite{BCL}, the $3$-Lie bialgebra $(L, \mu, \Delta)$ is discussed,  the structure of it is different to the local cocycle 3-Lie bialgebras introduced in paper \cite{BGSHh}, it is a special class of local cocycle 3-Lie bialgebras in the case $\Delta_1=\Delta_2=0$.
In \cite{BCL}, structures of $3$-Lie bialgebras are described by the structural constants, and  the complete  classification of $3$-dimensional $3$-Lie bialgebras are provided.

In this paper, we study the finite dimensional  $n$-Lie bialgebras over a field $\mathbb F$ of characteristic zero. In section 2, we discuss the  finite dimensional  $n$-Lie coalgebras.
In section 3, we study the structure of finite dimensional  $n$-Lie bialgebras. In section 4, we investigate  the two dimensional extension  of finite dimensional  $n$-Lie bialgebras. In section 5, we study $n$-Lie bialgebra structure on the simple $n$-Lie algebra $A_n$ over the field of complex numbers.

In the paper, we  suppose that $\mathbb F$ is a field of characteristic zero, $\mathbb Z$ is the set of integers, and $\mathbb Z^+\subset \mathbb Z$ is the set of all positive integers.

For $ i_1, \cdots, i_{n}, j_1, \cdots, j_n\in \mathbb Z^+$, the determinant
$\begin{vmatrix}
\delta_{i_{1}j_{1}} &\cdots & \delta_{i_{n}j_{1}}\\
\vdots &      & \vdots\\
\delta_{i_{1}j_{n}} &\cdots & \delta_{i_{n}j_{n}}\\
\end{vmatrix}$
is simply denoted by $\frac{i_1\cdots i_n}{j_1\cdots j_n}$, where $\delta_{ij}$ is the Kronecher symbol.

Let $V$ be a vector space, $V^*$ be the dual space of $V$. For all $x_1, \cdots, x_n\in V^*$ and $v_1, \cdots, v_n\in V$,
$$\langle x_1\otimes \cdots \otimes x_n, v_1\otimes \cdots \otimes v_n\rangle=\langle x_1, v_1\rangle \cdots \langle x_n, v_n\rangle, $$
where $\langle x_i, v_i\rangle=x_i(v_i), \quad 1\leq i\leq n.$

For a vector space $V$ and $s, n\in \mathbb Z^+$, the $(2n-1)$-ary linear map $\omega_s: V^{\otimes (n-1)}\otimes V^{\otimes n}\rightarrow V^{\otimes (n-1)}\otimes V^{\otimes n}$ is defined as follows, for all $v_1, \cdots, v_{n-1}, u_1, \cdots, u_n\in V$, and $x_1,$ $ \cdots,$ $ x_{n-1}, $ $y_1, $ $\cdots, $ $y_n\in V^*$,

\begin{equation}\label{eq:dualomega}
\langle~~  x_1\otimes \cdots\otimes x_{n-1}\otimes y_1\otimes \cdots\otimes y_n, \omega_s(v_1\otimes \cdots\otimes v_{n-1}\otimes u_1\otimes \cdots\otimes u_n)~~
 \rangle
 \end{equation}
$$=\langle ~~ \omega_s^*(x_1\otimes \cdots\otimes x_{n-1}\otimes y_1\otimes \cdots\otimes y_n), \quad v_1\otimes \cdots\otimes v_{n-1}\otimes u_1\otimes \cdots\otimes u_n~~ \rangle$$
$$\hspace{14mm}=\langle y_1\otimes \cdots\otimes \widehat{y_s}\otimes \cdots \otimes y_n\otimes x_1\otimes \cdots\otimes x_{n-1}\otimes y_s, v_1\otimes \cdots\otimes v_{n-1}\otimes u_1\otimes \cdots\otimes u_n~~ \rangle,$$
where $V^*$ be the dual space of $V$, and $\omega_s^*: V^{*\otimes n-1}\otimes V^{*\otimes n}\rightarrow $ $V^{*\otimes n-1}\otimes V^{*\otimes n}$ is the dual mapping of $\omega_s.$

\section{n-Lie coalgebra}
\mlabel{sec:con}

{\bf An $n$-Lie algebra} $(L,\mu)$ \cite{F} is a vector space $L$ over  $\mathbb F$ endowed with an $n$-multilinear  multiplication
$\mu: A^{\otimes n}\rightarrow A$ satisfying that for all  $x_{i_{1}},\cdots,x_{i_{n-1}},x_{j_{1}},\cdots,x_{j_{n}}\in L,$

\begin{equation}\label{eq:skew1}
\mu(x_1, \cdots, x_n)=sign(\sigma)~\mu(x_{\sigma(1)}, \cdots, x_{\sigma(n)}),
\end{equation}

\begin{equation}\label{eq:jacobi}
\mu(x_{i_{1}},\cdots,x_{i_{n-1}},\mu(x_{j_{1}},\cdots,x_{j_{n}}))=\sum_{s=1}^{n}(-1)^{n-s}\mu(x_{j_{1}},\cdots,\widehat{x_{j_{s}}},\cdots,x_{j_{n}},
\mu(x_{i_{1}},\cdots,x_{i_{n-1}},x_{j_{s}})),
\end{equation}
where $\sigma\in S_n$ and the symbol $\widehat{x_{j_s}}$ means that $x_{j_s}$ is omitted. In the case $n\geq 3, $ the identity Eq.\eqref{eq:jacobi} is usually called $n$-Jacobi identity, or Filipov identity.

The Eq.\eqref{eq:skew1} and Eq.\eqref{eq:jacobi} can be respectively described as follows

\begin{equation}\label{eq:skew}\mu(1-\tau)=0,
\end{equation}

\begin{equation}\label{eq:jacobi2}
\mu(\underbrace{1\otimes1\otimes\cdots\otimes1}_{n-1}\otimes\mu)(1-(-1)^{n-s}\sum_{s=1}^{n}\omega_{s})=0,
\end{equation}
where $1$ is identity,  $\omega_s$ is defined as Eq.\eqref{eq:dualomega}, for $1\leq s\leq n$, and  $\tau:L^{\otimes n}\rightarrow L^{\otimes n}$ is defined as  for all  $x_{i_{1}},$ $\cdots,$ $x_{i_n}\in L,$
$$\tau(x_{i_{1}}\otimes x_{i_{2}}\otimes\cdots\otimes x_{i_{n}})=sign(\sigma)x_{i_{\sigma (1)}}\otimes x_{i_{\sigma(2)}}\otimes\cdots\otimes x_{i_{\sigma(n)}}, \forall  \sigma\in S_{n}.$$

We give the definition of $n$-Lie coalgebras.

\begin{defn}\label{def:coalg}  An $n$-Lie coalgebra $(L, \Delta)$ is a vector space $L$ with a linear mapping $\Delta: L\rightarrow L^{\wedge n}$ satisfying that

\begin{equation}\label{eq:coeq}
(1-\sum\limits_{s=1}^{n}(-1)^{n-s}\omega_{s})(\underbrace{1\otimes1\otimes\cdots\otimes1}_{n-1}\otimes\Delta)\Delta=0,
\end{equation}
 where $1$ is identity, and $\omega_s$ is defined as Eq.\eqref{eq:dualomega} for $1\leq s\leq n$.
 \end{defn}

In Eq.\eqref{eq:coeq}, the linear mapping $1\otimes \cdots \otimes 1\otimes \Delta:$ $ L^{\otimes n}\rightarrow L^{\otimes 2n-1}$ satisfies that,  for all $x_k\in L$ and
 $x^{l_1}, $ $\cdots, $ $x^{l_{n-1}}, $ $x^{s_1}, $ $\cdots, $ $x^{s_n}\in L^*,$
\begin{equation}\label{eq:defco}
\langle x^{l_1}\otimes \cdots\otimes x^{l_{n-1}}\otimes x^{s_1}\otimes\cdots\otimes x^{s_n}, (1\otimes \cdots \otimes 1\otimes \Delta)(x_1\otimes \cdots \otimes x_n)\rangle
\end{equation}
$$\hspace{-1.5cm}=\langle x^{l_1}\otimes \cdots\otimes x^{l_{n-1}} \otimes x^{s_1}\otimes \cdots \otimes x^{s_n},  x_1\otimes \cdots \otimes x_{n-1}\otimes \Delta(x_n)\rangle$$
$$\hspace{-2.8cm}=\langle x^{l_1}\otimes \cdots\otimes x^{l_{n-1}}\otimes \Delta^*(x^{s_1},\cdots, x^{s_n}), x_1\otimes \cdots \otimes x_n\rangle.$$

For describing the structure of $n$-Lie coalgebras, we need structural  constants of $n$-Lie algebras.

Let $(L,\mu)$ be an $m$-dimensional $n$-Lie algebra with a basis $x_{1},x_{2},\cdots,x_{m}.$ Suppose

\begin{equation}\label{eq:Lcc1}
\mu(x_{i_1}, \cdots, x_{i_n})=\sum\limits_{k=1}^m c^k_{i_1\cdots i_n}x_k, \quad 1\leq k\leq m, ~~  1\leq i_1, \cdots, i_n\leq m.
\end{equation}

By Eq.\eqref{eq:skew}, $c_{i_{1}\cdots i_{n}}^{k}$ satisfies that
 \begin{equation}\label{eq:Lset1}
 c_{i_{1}\cdots i_{n}}^{k}=sign(\sigma)c_{i_{\sigma(1)}i_{\sigma(2)}\cdots i_{\sigma(n)}}^{k}, \quad \forall \sigma\in S_n, ~~ 1\leq i_{1}, i_{2}, \cdots, i_{n}\leq m.
 \end{equation}

For all $1\leq i_1, \cdots, i_{n-1}\leq m$ and $1\leq j_1,  \cdots,   j_n\leq m$,

$
\mu(x_{i_{1}},\cdots,x_{i_{n-1}},\mu(x_{j_{1}},\cdots,x_{j_{n}}))=\mu(x_{i_{1}},\cdots,x_{i_{n-1}},\sum\limits_{t=1}^{m}c_{j_{1}\cdots j_{n}}^{t}x_{t})
$\\
$=\sum\limits_{k=1}^{m}\sum\limits_{t=1}^{m}c_{j_{1}\cdots j_{n}}^{t}c_{i_{1}\cdots i_{n-1}t}^{k}x_{k},
$

\vspace{2mm}$
\sum\limits_{s=1}^{n}(-1)^{n-s}\mu(x_{j_{1}},\cdots,\widehat{x_{j_{s}}},\cdots,x_{j_{n}},\mu(x_{i_{1}},\cdots,x_{i_{n-1}},x_{j_{s}}))$
\\$=\sum\limits_{s=1}^{n}(-1)^{n-s}\mu(x_{j_{1}},\cdots,\widehat{x_{j_{s}}},\cdots,x_{j_{n}},\sum\limits_{t=1}^{m}c_{i_{1}\cdots i_{n-1}j_{s}}^{t}x_{t})$
\\
$=\sum\limits_{t=1}^{m}\sum\limits_{s=1}^{n}(-1)^{n-s}c_{i_{1}\cdots i_{n-1}j_{s}}^{t}\mu(x_{j_{1}},\cdots,\widehat{x_{j_{s}}},\cdots,x_{j_{n}},x_{t})
$
\\
$=\sum\limits_{k=1}^{m}\sum\limits_{t=1}^{m}\sum\limits_{s=1}^{n}(-1)^{n-s}c_{i_{1}\cdots i_{n-1}j_{s}}^{t}c^k_{j_{1}\cdots j_{ s-1}j_{s+1}\cdots j_{n}t}x_k,
$

\vspace{2mm}\noindent  by  Eq.\eqref{eq:jacobi}, we obtain that

\begin{equation}\label{eq:lset2}
\sum\limits_{t=1}^mc_{j_{1}\cdots j_{n}}^{t}c_{i_{1}\cdots i_{n-1}t}^{k}-\sum\limits_{t=1}^m\sum\limits_{s=1}^{n}(-1)^{n-s}c_{i_{1}\cdots i_{n-1}j_{s}}^tc^k_{j_{1}\cdots j_{ s-1}j_{s+1}\cdots j_{n}t}=0.
\end{equation}

This shows the following result.

\begin{theorem}
Let $L$ be a vector space with a basis $x_{1},x_{2},\cdots,x_{m}$, the $n$-ary linear multiplication $\mu: L^{\otimes n} \rightarrow L$ be  defined as Eq.\eqref{eq:Lcc1}.
Then  $(L, \mu)$ is an $n$-Lie algebra if and only $\mu$ satisfies Eq.\eqref{eq:Lset1} and Eq.\eqref{eq:lset2}.
\label{thm:Lcons}
\end{theorem}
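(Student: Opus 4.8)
The plan is to prove the equivalence by simply unraveling the two defining axioms of an $n$-Lie algebra, Eq.\eqref{eq:skew} and Eq.\eqref{eq:jacobi}, in terms of the chosen basis. Since $\mu$ is $n$-linear, it is completely determined by its values on basis tuples, i.e.\ by the constants $c^k_{i_1\cdots i_n}$ of Eq.\eqref{eq:Lcc1}; so the content of the theorem is that Eq.\eqref{eq:skew1} is equivalent to Eq.\eqref{eq:Lset1} and, \emph{given} skew-symmetry, Eq.\eqref{eq:jacobi} is equivalent to Eq.\eqref{eq:lset2}.

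First I would note that the forward direction is immediate: if $(L,\mu)$ is an $n$-Lie algebra, then evaluating the skew-symmetry relation Eq.\eqref{eq:skew1} on the basis tuple $(x_{i_1},\dots,x_{i_n})$ and reading off coefficients of $x_k$ yields Eq.\eqref{eq:Lset1}, while the two displayed computations preceding the theorem statement — expanding both sides of the $n$-Jacobi identity Eq.\eqref{eq:jacobi} on basis elements and comparing the coefficient of each $x_k$ — yield exactly Eq.\eqref{eq:lset2}. For the converse, I would run the same computation in reverse: assume $\mu$ is defined by Eq.\eqref{eq:Lcc1} with constants satisfying Eq.\eqref{eq:Lset1} and Eq.\eqref{eq:lset2}. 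Skew-symmetry Eq.\eqref{eq:skew1} for arbitrary inputs follows from Eq.\eqref{eq:Lset1} by $n$-linearity (expand each $x_{i_j}$ as a linear combination of basis vectors and use multilinearity of both sides, together with the fact that $\mathrm{sign}(\sigma)$ does not depend on the inputs). Then the $n$-Jacobi identity for arbitrary inputs $x_{i_1},\dots,x_{i_{n-1}},x_{j_1},\dots,x_{j_n}$ reduces by multilinearity to the case where all inputs are basis vectors, and in that case the identity is precisely the statement that the coefficient of $x_k$ on the left equals the coefficient of $x_k$ on the right for every $k$, which is exactly Eq.\eqref{eq:lset2}.

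The only mild subtlety — and the step I would be most careful with — is bookkeeping in the converse direction: one must check that the reindexing appearing in Eq.\eqref{eq:lset2} (the hatted index $\widehat{x_{j_s}}$ and the appended contracted index $t$) genuinely matches the expansion of Eq.\eqref{eq:jacobi}, and that no implicit use of skew-symmetry is needed beyond what Eq.\eqref{eq:Lset1} provides. In fact the displayed pre-theorem computation already records both expansions, so the proof amounts to observing that those two computations are valid in both directions once multilinearity is invoked; there is no genuine obstacle, only the routine verification that the coefficient extraction is reversible. I would therefore present the proof as: (i) reduce everything to basis tuples by multilinearity; (ii) cite the displayed expansions to identify Eq.\eqref{eq:skew1} with Eq.\eqref{eq:Lset1} and Eq.\eqref{eq:jacobi} with Eq.\eqref{eq:lset2}; (iii) conclude.
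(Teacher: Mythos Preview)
Your proposal is correct and matches the paper's approach: the paper proves this theorem precisely by the two displayed expansions of the left and right sides of Eq.\eqref{eq:jacobi} on basis vectors (together with the remark that Eq.\eqref{eq:skew} gives Eq.\eqref{eq:Lset1}), and then simply states the theorem as the summary of that computation. Your added remarks about invoking multilinearity to pass back from basis tuples to arbitrary inputs make the converse direction explicit, which the paper leaves implicit.
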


\begin{theorem}\label{thm:LandC}
Let $L$ be a vector space over a field $\mathbb F$, $\Delta: L\rightarrow L^{\otimes n}$ be a linear map. Then $(L, \Delta)$ is an $n$-Lie coalgebra if and only if ~ $(L^*, \Delta^*)$ ~ is an $n$-Lie algebra.
\end{theorem}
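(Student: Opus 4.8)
The statement is a duality: $(L,\Delta)$ is an $n$-Lie coalgebra iff $(L^*,\Delta^*)$ is an $n$-Lie algebra, where $\Delta^* : (L^*)^{\otimes n}\to L^*$ is the transpose of $\Delta : L\to L^{\otimes n}$ under the canonical pairing $\langle\,,\rangle$ (note this uses that $L$, hence $L^*$, is finite-dimensional, so that $(L^{\otimes n})^* \cong (L^*)^{\otimes n}$ and $\Delta^{**}=\Delta$). The strategy is to translate each defining identity of an $n$-Lie coalgebra (Definition~\ref{def:coalg}) into its dual and recognize it as the corresponding defining identity of an $n$-Lie algebra (Eq.~\eqref{eq:skew} and Eq.~\eqref{eq:jacobi2}). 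There are two conditions to match: the skew-symmetry encoded in $\Delta$ landing in $L^{\wedge n}$, and the co-Jacobi identity Eq.~\eqref{eq:coeq}.

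First I would handle skew-symmetry. The hypothesis $\Delta(L)\subseteq L^{\wedge n}$ means $(1-\tau)$ applied after... more precisely $\tau\Delta = \Delta$ in the appropriate sense, i.e. $\Delta = \mathrm{sign}(\sigma)(\sigma\cdot\Delta)$ componentwise. Dualizing the pairing $\langle x^{1}\otimes\cdots\otimes x^{n},\Delta(x)\rangle = \langle \Delta^*(x^{1},\dots,x^{n}),x\rangle$ and using that $\langle\,,\rangle$ on $L^{*\otimes n}\times L^{\otimes n}$ intertwines the $S_n$-actions on the two factors, the condition $\Delta\tau^* = \Delta$ (transpose side) becomes exactly $\Delta^*(x^{\sigma(1)},\dots,x^{\sigma(n)}) = \mathrm{sign}(\sigma)\Delta^*(x^{1},\dots,x^{n})$, which is Eq.~\eqref{eq:skew1}/\eqref{eq:skew} for $\mu := \Delta^*$. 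This step is essentially bookkeeping with the pairing.

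The substantive step is matching the co-Jacobi identity with the Filippov identity. The key observation, already recorded in Eq.~\eqref{eq:dualomega}, is that $\omega_s$ is defined precisely as the transpose of the permutation operator $\omega_s^*$ appearing in Eq.~\eqref{eq:jacobi2}; and Eq.~\eqref{eq:defco} records that $1^{\otimes(n-1)}\otimes\Delta$ is the transpose of $1^{\otimes(n-1)}\otimes\Delta^*$. Therefore the operator $(1-\sum_{s=1}^n(-1)^{n-s}\omega_s)(1^{\otimes(n-1)}\otimes\Delta)\Delta : L\to L^{\otimes(2n-1)}$ appearing in Eq.~\eqref{eq:coeq} is, by functoriality of transpose (reversing composition order), exactly the transpose of the operator $\Delta^*(1^{\otimes(n-1)}\otimes\Delta^*)(1-\sum_{s=1}^n(-1)^{n-s}\omega_s^*) : (L^*)^{\otimes(2n-1)}\to L^*$. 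Since a linear map between finite-dimensional spaces is zero iff its transpose is zero, Eq.~\eqref{eq:coeq} holds iff this dual operator vanishes — and the vanishing of the dual operator is precisely Eq.~\eqref{eq:jacobi2} for $\mu := \Delta^*$ (up to relabeling the inner vs. outer tensor slots, which is consistent with the conventions fixed in \eqref{eq:dualomega}). Invoking Theorem~\ref{thm:Lcons} (or directly the definition of $n$-Lie algebra) then gives that $(L^*,\Delta^*)$ is an $n$-Lie algebra.

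The only real obstacle is clerical rather than conceptual: one must be scrupulous about the order of the tensor factors and which copies of $L$ versus $L^*$ the permutation operators act on, because transposing a composite reverses the order of the factors, and because $\omega_s$ on $V^{\otimes(n-1)}\otimes V^{\otimes n}$ permutes the "$\Delta$-output" slots among themselves while fixing the "spectator" slots — so one has to check that the dualization sends "spectator on the right, output on the left" to "spectator on the left, output on the right" consistently with how $\mu(1^{\otimes(n-1)}\otimes\mu)$ is written in Eq.~\eqref{eq:jacobi2}. Once the dictionary $\omega_s \leftrightarrow \omega_s^*$, $\;1^{\otimes(n-1)}\otimes\Delta \leftrightarrow 1^{\otimes(n-1)}\otimes\Delta^*$, $\;\Delta\leftrightarrow\Delta^*$ is set up and one notes composition reverses, the equivalence of \eqref{eq:coeq} with \eqref{eq:jacobi2} is immediate, and the proof is complete by the finite-dimensional biduality $\Delta^{**}=\Delta$, which also gives the converse direction with no extra work.
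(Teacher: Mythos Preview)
Your argument is correct. The paper takes a different, coordinate-based route: it fixes a basis $x_1,\dots,x_m$ of $L$ with dual basis $x^1,\dots,x^m$, writes out the structural constants $c^k_{i_1\cdots i_n}$ of $\Delta^*$ and $a_k^{i_1\cdots i_n}$ of $\Delta$, and shows (Eq.~\eqref{eq:CCL}) that they coincide; the equivalence then follows because both the $n$-Lie algebra axioms for $(L^*,\Delta^*)$ (via Theorem~\ref{thm:Lcons}) and the $n$-Lie coalgebra axiom~\eqref{eq:coeq} for $(L,\Delta)$ reduce to the \emph{same} polynomial identities in these constants --- the paper in fact spells out this reduction explicitly in the paragraph immediately following the proof. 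Your approach instead dualizes the defining operator identities directly, using that $\omega_s\leftrightarrow\omega_s^*$ (Eq.~\eqref{eq:dualomega}) and $1^{\otimes(n-1)}\otimes\Delta\leftrightarrow 1^{\otimes(n-1)}\otimes\Delta^*$ (Eq.~\eqref{eq:defco}) under transpose, together with the fact that a linear map between finite-dimensional spaces vanishes iff its transpose does. Your version is basis-free and makes the finite-dimensionality hypothesis explicit (the paper tacitly assumes it by choosing a finite basis); the paper's version, while more computational, has the payoff that the structural-constant identity~\eqref{eq:CCL} is reused heavily downstream (Theorem~\ref{thm:bialgconst}, Corollary~\ref{cor:1}, Theorem~\ref{thm:biext}).
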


\begin{proof} Let $L^*$  be the dual space of $L$, $x^1, \cdots, x^m$ be the dual basis of $x_1, \cdots, x_m$,
that is, $\langle x_i, x^j\rangle=\delta_{ij}$, $1\leq i, j\leq m. $ Let $\Delta^*: L^{*\otimes n} \rightarrow L^*$ be the dual mapping of $\Delta$, that is, for all $y^1, \cdots, y^n\in L^*$,
and $x\in L$,
\begin{equation}\label{eq:dual}
\langle y^1 \otimes \cdots \otimes y^n, ~~ \Delta(x)\rangle=\langle \Delta^*(y^1,  \cdots, y^n), x\rangle.
\end{equation}

Suppose
$$
\Delta^*(x^{i_1}, \cdots, x^{i_n})=\sum\limits_{k=1}^m c^k_{i_1\cdots i_n}x^k, \quad 1\leq k\leq m, ~~  1\leq i_1, \cdots, i_n\leq m.
$$
and
\begin{equation}\label{eq:Lcc2}
\Delta(x_k)=\sum\limits_{i_1\cdots i_n}a^{i_1\cdots i_n}_kx_{i_1}\otimes \cdots \otimes x_{i_n}, \quad 1\leq i_1, \cdots, i_n\leq m, 1\leq k\leq m.
\end{equation}
Then for all $\quad 1\leq i_1, \cdots, i_n\leq m, \quad  1\leq j\leq m,$

$$\langle \Delta^*(x^{i_1},  \cdots,  x^{i_n}), x_j\rangle=\langle \sum\limits_{k=1}^mc_{i_1\cdots i_n}^k x^k, x_j\rangle=c^j_{i_1\cdots i_n}.$$

Since

$\langle \Delta^*(x^{i_1},  \cdots,  x^{i_n}), x_j\rangle=\langle x^{i_1}\otimes \cdots\otimes  x^{i_n}, \Delta(x_j)\rangle
$
\\
$=\langle x^{i_1}\otimes \cdots\otimes  x^{i_n}, \sum\limits_{j_1\cdots j_n} a^{j_1\cdots j_n}_j(x_{j_1} \otimes \cdots \otimes x_{j_n}\rangle
$
$=a^{i_1\cdots i_n}_j$,
 \\we obtain that

\begin{equation}\label{eq:CCL}
a_k^{i_1\cdots i_n}=c^k_{i_1\cdots i_n}, \quad 1\leq i_1, \cdots, i_n\leq m, \quad 1\leq k\leq m.
\end{equation}

Therefore, the result holds.

\end{proof}

In fact, if $(L, \Delta)$ is an $n$-Lie coalgebra, by Eq.\eqref{eq:CCL} and Eq.\eqref{eq:coeq}, for all $1\leq l_1< \cdots l_{n-1}\leq m$ and $1\leq s_1 < \cdots < s_n\leq m,$

 \vspace{2mm}$\langle ~ x^{l_1}\otimes \cdots\otimes x^{l_{n-1}}\otimes x^{s_1}\otimes\cdots\otimes x^{s_n}, (1\otimes \cdots \otimes 1\otimes \Delta)\Delta(x_k) ~ \rangle$
 \\
 $=\langle ~ x^{l_1}\otimes \cdots\otimes x^{l_{n-1}}\otimes x^{s_1}\otimes\cdots\otimes x^{s_n}, (1\otimes \cdots \otimes 1\otimes \Delta)\Big(\sum\limits_{i_1\cdots i_n}a^{i_1\cdots i_n}_kx_{i_1}\wedge \cdots\wedge x_{i_n}\Big) ~ \rangle$
 \\
  $=\langle x^{l_1}\otimes \cdots\otimes x^{l_{n-1}}\otimes \Delta^*(x^{s_1}\otimes\cdots\otimes x^{s_n}),\sum\limits_{i_1\cdots i_n}a^{i_1\cdots i_n}_kx_{i_1}\wedge \cdots\wedge x_{i_n}\rangle$
 \\
$=\sum\limits_{i_1\cdots i_n}a^{i_1\cdots i_n}_k \sum\limits_{t=1}^m c_{s_1\cdots s_n}^t\langle x^{l_1}\otimes \cdots\otimes x^{l_{n-1}}\otimes x^t, x_{i_1}\wedge \cdots\wedge x_{i_n}\rangle$
 \\
$=\sum\limits_{t=1}^mc_{s_1\cdots s_n}^t\sum\limits_{i_1\cdots i_n}a^{i_1\cdots i_n}_k \frac{l_1\cdots l_{n-1}t}{i_1 \cdots i_n}$
 $=\sum\limits_{t=1}^m c_{s_1\cdots s_n}^ta^{l_1\cdots l_{n-1}t}_k=\sum\limits_{t=1}^m c_{s_1\cdots s_n}^tc_{l_1\cdots l_{n-1}t}^k$

\vspace{5mm}\noindent$=\langle ~x^{l_1}\otimes \cdots\otimes x^{l_{n-1}}\otimes x^{s_1}\otimes\cdots\otimes x^{s_n}, \sum\limits_{r=1}^{n}(-1)^{n-r}\omega_{r}(1\otimes1\otimes\cdots\otimes1\otimes\Delta)\Delta(x_k)~ \rangle$
\\
$=\langle ~\sum\limits_{r=1}^{n}(-1)^{n-r}\omega_{r}^*(x^{l_1}\otimes \cdots\otimes x^{l_{n-1}}\otimes x^{s_1}\otimes\cdots\otimes x^{s_n}), (1\otimes1\otimes\cdots\otimes1
\otimes\Delta)\Delta(x_k) ~\rangle$
\\
$
=\langle ~\sum\limits_{r=1}^{n}(-1)^{n-r}(x^{s_1}\otimes \cdots \otimes \widehat{x^{s_r}}\otimes \cdots x^{s_{n}}\otimes \Delta^*(x^{l_1},\cdots, x^{l_{n-1}}, x^{s_r}), \sum\limits_{i_1\cdots i_n} a^{i_1\cdots i_n}_kx_{i_1}\wedge\cdots\wedge x_{i_n} ~ \rangle
$
\\
$
= \sum\limits_{r=1}^{n}(-1)^{n-r}\sum\limits_{i_1\cdots i_n} a^{i_1\cdots i_n}_k\sum\limits_{t=1}^m c^t_{l_1\cdots l_{n-1}s_r}\langle x^{s_1}\otimes \cdots \otimes \widehat{x^{s_r}}\otimes \cdots x^{s_{n}}\otimes x^t, x_{i_1}\wedge\cdots\wedge x_{i_n}\rangle
$
\\
$
= \sum\limits_{t=1}^m\sum\limits_{r=1}^{n}(-1)^{n-r}\sum\limits_{i_1\cdots i_n} a^{i_1\cdots i_n}_k c^t_{l_1\cdots l_{n-1}s_r} \frac{s_1 \cdots  s_{r-1} s_{r+1}\cdots s_{n} t}{ i_1\cdots i_n}
$
\\
$
= \sum\limits_{t=1}^m\sum\limits_{r=1}^{n}(-1)^{n-r} a^{s_1 \cdots  s_{r-1} s_{r+1}\cdots s_{n} t}_k c^t_{l_1\cdots l_{n-1}s_r}
$
\\
$
= \sum\limits_{t=1}^m\sum\limits_{r=1}^{n}(-1)^{n-r} c_{s_1 \cdots  s_{r-1} s_{r+1}\cdots s_{n} t}^k c^t_{l_1\cdots l_{n-1}s_r}.
$

Therefore, $(L^*, \Delta^*)$ is an $n$-Lie algebra.

Let $(L, \Delta)$ be an $n$-Lie coalgebra.  If the dimension of the derived algebra $L^{*1}$  of $n$-Lie algebra $(L^*, \Delta^*)$ is $r$, then $n$-Lie coalgebra $(L, \Delta)$
 is referred to as {\bf an rank $r$,  and is denoted by $R(\Delta)=r.$}

\begin{defn}\label{def:coiso}   Let $(L_{1},\Delta_{1})$ and $(L_{2},\Delta_{2})$ be $n$-Lie coalgebras. If there is a linear isomorphism
$\varphi:L_{1}\rightarrow L_{2}$ satisfying that, for all $x\in L_{1},$
\begin{equation}\label{eq:coiso}
\underbrace{(\varphi\otimes\cdots\otimes\varphi}_{n})(\Delta_{1}(x))=\Delta_{2}(\varphi(x)),
\end{equation}
then $(L_{1},\Delta_{1})$ is isomorphic to $(L_{2},\Delta_{2}),$ and $\varphi$ is called an $n$-Lie coalgebra isomorphism,where
$$(\underbrace{\varphi\otimes\cdots\otimes\varphi}_{n})\sum_{i}(x_{1_{i}}\otimes x_{2_{i}}\otimes\cdots\otimes x_{n_{i}})=\sum_{i}\varphi(x_{1_{i}})\otimes\varphi(x_{2_{i}})\otimes\cdots\otimes\varphi(x_{n_{i}}).$$
\end{defn}

\begin{theorem}\label{thm:LCL}  Let $(L_{1},\Delta_{1})$ and $(L_{2},\Delta_{2})$ be $n$-Lie coalgebras. Then $\varphi:L_{1}\rightarrow L_{2}$ is an
$n$-Lie coalgebra isomorphism from $(L_{1},\Delta_{1})$ to $(L_{2},\Delta_{2})$ if and only if the dual mapping $\varphi^{\ast}:L_{2}^{\ast}\rightarrow L_{1}^{\ast}$ is an $n$-Lie algebra isomorphism from $(L_{2}^{\ast},\Delta_{2}^{\ast})$ to $(L_{1}^{\ast},\Delta_{1}^{\ast}),$ where for $\xi\in L_{2}^{\ast},$ $v\in L_{1},$
$\langle\varphi^{\ast}(\xi),v\rangle=\langle\xi,\varphi(v)\rangle.$
\end{theorem}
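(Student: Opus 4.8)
The plan is to reduce the statement to a purely dual-linear-algebra fact that has essentially already been proven in Theorem~\ref{thm:LandC}, namely that $(L,\Delta)$ is an $n$-Lie coalgebra precisely when $(L^*,\Delta^*)$ is an $n$-Lie algebra, together with the matching of structural constants $a_k^{i_1\cdots i_n}=c^k_{i_1\cdots i_n}$ in Eq.\eqref{eq:CCL}. First I would record the standard fact that dualization is a contravariant involution on finite-dimensional vector spaces: for a linear map $\varphi:L_1\to L_2$ one has $\varphi^*:L_2^*\to L_1^*$, and under the canonical identifications $L_i\cong L_i^{**}$ one has $\varphi^{**}=\varphi$; moreover $\varphi$ is an isomorphism iff $\varphi^*$ is, with $(\varphi^*)^{-1}=(\varphi^{-1})^*$. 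This symmetry means it suffices to prove one direction of the ``if and only if,'' say that if $\varphi$ is an $n$-Lie coalgebra isomorphism then $\varphi^*$ is an $n$-Lie algebra homomorphism (bijectivity being automatic), and then apply the same argument to $\varphi^{-1}$ and to $\varphi^{**}=\varphi$ to get the converse.

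The core computation is to translate the intertwining condition Eq.\eqref{eq:coiso}, $(\varphi^{\otimes n})\Delta_1(x)=\Delta_2(\varphi(x))$ for all $x\in L_1$, into the homomorphism condition $\varphi^*(\Delta_2^*(\xi^1,\dots,\xi^n))=\Delta_1^*(\varphi^*(\xi^1),\dots,\varphi^*(\xi^n))$ for all $\xi^1,\dots,\xi^n\in L_2^*$. The key step is to pair both sides of Eq.\eqref{eq:coiso} against an arbitrary decomposable tensor $\xi^1\otimes\cdots\otimes\xi^n\in (L_2^*)^{\otimes n}$ and push everything through the defining adjunctions: on the left, $\langle \xi^1\otimes\cdots\otimes\xi^n,(\varphi^{\otimes n})\Delta_1(x)\rangle=\langle (\varphi^*)^{\otimes n}(\xi^1\otimes\cdots\otimes\xi^n),\Delta_1(x)\rangle=\langle \Delta_1^*(\varphi^*\xi^1,\dots,\varphi^*\xi^n),x\rangle$, using the definition Eq.\eqref{eq:dual} of $\Delta_1^*$; on the right, $\langle \xi^1\otimes\cdots\otimes\xi^n,\Delta_2(\varphi(x))\rangle=\langle \Delta_2^*(\xi^1,\dots,\xi^n),\varphi(x)\rangle=\langle \varphi^*(\Delta_2^*(\xi^1,\dots,\xi^n)),x\rangle$. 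Since this holds for all $x\in L_1$ and the pairing $L_1^*\times L_1\to\mathbb F$ is nondegenerate, the two functionals $\Delta_1^*(\varphi^*\xi^1,\dots,\varphi^*\xi^n)$ and $\varphi^*(\Delta_2^*(\xi^1,\dots,\xi^n))$ coincide; as decomposable tensors span $(L_2^*)^{\otimes n}$ and all maps in sight are multilinear, this gives exactly the homomorphism identity for $\varphi^*$. Running the same chain of equalities in reverse shows the converse implication for a fixed bijection, so combining with the involutivity remark above closes the equivalence.

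The only mild subtlety — not really an obstacle — is the compatibility of the tensor-dual identification with skew-symmetry: $\Delta_i$ lands in $L_i^{\wedge n}\subset L_i^{\otimes n}$, and $\Delta_i^*$ is defined on $(L_i^*)^{\otimes n}$ but factors through $(L_i^*)^{\wedge n}$ because $\Delta_i(x)$ is already antisymmetric; one should note that $(\varphi^*)^{\otimes n}$ carries $(L_2^*)^{\wedge n}$ into $(L_1^*)^{\wedge n}$, so all the pairings above are well-defined and the identity $(\varphi^{\otimes n})^*=(\varphi^*)^{\otimes n}$ on $n$-fold tensor powers is exactly the classical fact $\langle f_1\otimes\cdots\otimes f_n,\,(\varphi^{\otimes n})(v_1\otimes\cdots\otimes v_n)\rangle=\prod_i\langle f_i,\varphi(v_i)\rangle=\langle (\varphi^*)^{\otimes n}(f_1\otimes\cdots\otimes f_n),v_1\otimes\cdots\otimes v_n\rangle$. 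I expect the whole proof to be short: fix bases $x_1,\dots,x_m$ of $L_1$ and $y_1,\dots,y_m$ of $L_2$ with dual bases, write $\varphi(x_j)=\sum_i P_{ij}y_i$ so that $\varphi^*(y^i)=\sum_j P_{ij}x^j$, and observe that Eq.\eqref{eq:coiso} in coordinates is precisely the statement that the matrix $P$ intertwines the structural constants of $\Delta_1,\Delta_2$, which by Eq.\eqref{eq:CCL} is the same system of equations expressing that $P^{-1}$ (equivalently $P^{\mathrm T}$, depending on index conventions) intertwines the $n$-Lie algebra structural constants of $\Delta_2^*,\Delta_1^*$ — i.e.\ that $\varphi^*$ is an $n$-Lie algebra isomorphism. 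This coordinate check can be cited as the ``routine verification'' rather than written out in full.
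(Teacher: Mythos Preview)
Your proposal is correct and follows essentially the same approach as the paper: the paper's proof is precisely your ``core computation,'' pairing the coalgebra isomorphism condition against an arbitrary $f_{k_1}\otimes\cdots\otimes f_{k_n}\in (L_2^*)^{\otimes n}$ and using the defining adjunctions for $\varphi^*$, $\Delta_1^*$, $\Delta_2^*$ to obtain $\varphi^*\Delta_2^*(f_{k_1},\dots,f_{k_n})=\Delta_1^*(\varphi^*f_{k_1},\dots,\varphi^*f_{k_n})$. The paper dispatches the converse with ``similar discussion,'' whereas you make this explicit via the involutivity $\varphi^{**}=\varphi$; your remarks on skew-symmetry and the coordinate alternative are extra but harmless.
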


\begin{proof} Since $(L_{1},\Delta_{1})$ and $(L_{2},\Delta_{2})$ are $n$-Lie coalgebras, thanks to Theorem \ref{thm:LandC} $(L_{1}^{\ast},\Delta_{1}^{\ast})$ and $(L_{2}^{\ast},\Delta_{2}^{\ast})$ are $n$-Lie algebras. If  $\varphi:L_{1}\rightarrow L_{2}$  is an
$n$-Lie coalgebra isomorphism,  then the dual mapping
$\varphi^{\ast}:L_{2}^{\ast}\rightarrow L_{1}^{\ast}$ is a linear isomorphism, and for all
$f_{k_{1}},\cdots,f_{k_{n}}\in L_{2}^{\ast},$ $x\in L_{1}^{\ast},$ by Eqs. \eqref{eq:skew} and \eqref{eq:dual}

$
\langle\varphi^{\ast}\Delta_{2}^{\ast}(f_{k_{1}},\cdots,f_{k_{n}}),x\rangle=\langle\Delta_{2}^{\ast}(f_{k_{1}},\cdots,f_{k_{n}}),\varphi(x)\rangle$
$
=\langle f_{k_{1}}\otimes\cdots\otimes f_{k_{n}},\Delta_{2}(\varphi(x))\rangle
$
\\
$
=\langle f_{k_{1}}\otimes\cdots\otimes f_{k_{n}},(\underbrace{\varphi\otimes\cdots\otimes\varphi}_{n})(\Delta_{1}(x))\rangle$
$=\langle \varphi^{\ast}(f_{k_{1}})\otimes\cdots\otimes \varphi^{\ast}(f_{k_{n}}),\Delta_{1}(x)\rangle$
\\
$=\langle\Delta_{1}^{\ast}(\varphi^{\ast}(f_{k_{1}}),\varphi^{\ast}(f_{k_{2}}),\cdots,\varphi^{\ast}(f_{k_{n}})),x\rangle.$

Therefore, $\varphi^{\ast}\Delta_{2}^{\ast}(f_{k_{1}},\cdots,f_{k_{n}})=\Delta_{1}^{\ast}(\varphi^{\ast}(f_{k_{1}}),
\varphi^{\ast}(f_{k_{2}}),\cdots,\varphi^{\ast}(f_{k_{n}})),$
that is, $\varphi^{\ast}:L_{2}^{\ast}\rightarrow L_{1}^{\ast}$ is an $n$-Lie algebra isomorphism from $(L_{2}^{\ast},\Delta_{2}^{\ast})$ to $(L_{1}^{\ast},\Delta_{1}^{\ast}).$

Similar discussion, we obtain the conversion. The proof is complete.
\end{proof}

\begin{exam}\label{exam:comatrix}
Now we give an example of $3$-Lie coalgebra. Let $V=M(m, \mathbb F)$ be a vector space of $(m\times m)$-order matrices over a field $\mathbb F$. Then $V$ has a basis
$$\Big\{ E_{ij}~ | ~ 1\leq i\neq j\leq m\Big\}\cup\Big\{ E_{jj}-E_{j+1,j+1}, ~ E=\sum\limits_{i+1}^m E_{ii}~ |~1\leq j\leq m-1\Big\},$$
where $E_{ij}$ is matrix unit.
Define linear mapping $\Delta: V \rightarrow V^{\wedge  3}$, by
$$
\Delta(E_{ij})=\sum\limits_{k=1}^mE_{kk}\wedge E_{ik}\wedge E_{kj}, ~ 1\leq i\neq j\leq m, \quad \Delta(E)=0,$$

$$\Delta(E_{ii}-E_{i+1,i+1})=\sum\limits_{k=1}^mE_{kk}\wedge E_{i,i+1}\wedge E_{i+1, i}, ~  1\leq i\leq m-1.
$$
Then by a direct computation,   $(V, \Delta)$ is a $3$-Lie coalgebra.

\end{exam}

\begin{exam}\label{exam:con+1n}
Let  $V$ be an $(n+1)$-dimensional vector space  with a basis $e_1, \cdots, e_{n+1}.$
Define linear map $\Delta: V \rightarrow V^{\otimes n}$, by
$$
\Delta(e_i)=e_1\wedge \cdots\wedge \widehat{e_i}\wedge \cdots\wedge e_{n+1}, ~ 1\leq i\leq n+1.
$$
Then $(V, \Delta)$ is an $n$-Lie coalgebra with rank $n+1$.

\end{exam}

\section{n-Lie bialgebra}
\mlabel{sec:bialg}

In this section we discuss $n$-Lie bialgebras.

 Let $(L, \mu)$ be an $n$-Lie algebra,  $(V, \rho_i)$, ~ $i=1, \cdots, l$, ~ be  representations of $(L, \mu)$, and $f: L\rightarrow V$ be a linear mapping.
If $f$ satisfies that for all $x_1, \cdots, x_n\in L,$

\begin{equation}\label{eq:cocyc1}
f(\mu(x_1, \cdots, x_n))=\sum\limits_{k=1}^n(-1)^{n-k}\rho_i(x_1, \cdots, \widehat{x_k}, \cdots, x_n)f(x_k),
\end{equation}
then $f$ is called { \bf an $1$-cocycle on $L$ associated  to $(V, \rho_i)$.}

If $f$ satisfies that for all $x_1, \cdots, x_n\in L,$
\begin{equation}\label{eq:cocyc2}
f(\mu(x_1, \cdots, x_n))=\sum\limits_{i=1}^l\sum\limits_{k=1}^n(-1)^{n-k}\rho_i(x_1, \cdots, \widehat{x_k}, \cdots, x_n)f(x_k),
\end{equation}
then $f$ is called { \bf a conformal $1$-cocycle on $L$ associated to $(V, \rho_i)$, $1\leq i\leq l$.}

Let $(L, \mu)$ be an $n$-Lie algebra, for $1\leq s\leq n$, define linear mapping $\rho^{\mu}_s: L^{\wedge n-1}\rightarrow End(L^{\otimes n}),$
$$
\rho^{\mu}_s=\underbrace{1\otimes \cdots \otimes 1}_{s-1}\otimes ad_{\mu} \otimes \underbrace{1\otimes \cdots\otimes 1}_{n-s},
$$
for all $x_1, \cdots, x_{n-1}, y_1, \cdots, y_n\in L,$
\begin{equation}\label{eq:rho}
\rho^{\mu}_s(x_1,\cdots, x_{n-1})=\underbrace{1\otimes \cdots \otimes 1}_{s-1}\otimes ad_{\mu}(x_1, \cdots, x_{{n-1}})\otimes \underbrace{1\otimes \cdots\otimes 1}_{n-s},
\end{equation}
$$
\rho^{\mu}_s(x_1,\cdots, x_{n-1})(y_1, \cdots, y_n)=(\underbrace{1\otimes \cdots \otimes 1}_{s-1}\otimes ad_{\mu}(x_1, \cdots, x_{{n-1}})\otimes \underbrace{1\otimes \cdots\otimes 1}_{n-s})(y_1, \cdots, y_n)
$$

\hspace{5cm}$
=y_1\otimes \cdots \otimes y_{s-1}\otimes \mu(x_1, \cdots, x_{n-1}, y_s)\otimes y_{s+1}\otimes \cdots \otimes y_n.
$

We have the following result.

\begin{theorem}\label{thm:rhorepre}

Let $(L, \mu)$ be an $n$-Lie algebra, and $\rho^{\mu}_s$, $1\leq s\leq n$ are defined as Eq.\eqref{eq:rho}. Then $(L^{\otimes n}, \rho^{\mu}_s)$ for $1\leq s\leq n$
are modules of  $n$-Lie algebra $(L, \mu)$.

\end{theorem}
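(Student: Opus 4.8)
The statement asserts that for each fixed $s$ with $1\le s\le n$, the pair $(L^{\otimes n},\rho^{\mu}_s)$ is a module over the $n$-Lie algebra $(L,\mu)$. By definition of a representation of an $n$-Lie algebra, I must verify two things for the map $\rho^{\mu}_s:L^{\wedge(n-1)}\to \mathrm{End}(L^{\otimes n})$: first, the ``fundamental'' (Leibniz-type) identity expressing that $\rho^{\mu}_s(x_1,\dots,x_{n-1})$ acts as a derivation of the $n$-bracket in the last slot, i.e.
$$
\rho^{\mu}_s(x_1,\dots,x_{n-1})\,\rho^{\mu}_s(y_1,\dots,y_{n-1})-\rho^{\mu}_s(y_1,\dots,y_{n-1})\,\rho^{\mu}_s(x_1,\dots,x_{n-1})
=\sum_{k=1}^{n-1}\rho^{\mu}_s(y_1,\dots,\mu(x_1,\dots,x_{n-1},y_k),\dots,y_{n-1});
$$
and second, the identity expressing compatibility with the Filippov identity on the bracketed arguments,
$$
\rho^{\mu}_s\big(x_1,\dots,x_{n-2},\mu(y_1,\dots,y_n)\big)=\sum_{k=1}^{n}(-1)^{n-k}\,\rho^{\mu}_s(y_1,\dots,\widehat{y_k},\dots,y_n)\,\rho^{\mu}_s(x_1,\dots,x_{n-2},y_k).
$$
Both will be reduced to the $n$-Jacobi (Filippov) identity Eq.\eqref{eq:jacobi} for $(L,\mu)$ itself.

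**Key steps.** First I would record that $\rho^{\mu}_s(x_1,\dots,x_{n-1})$ acts only on the $s$-th tensor factor, through the operator $ad_{\mu}(x_1,\dots,x_{n-1})=\mu(x_1,\dots,x_{n-1},-)\in\mathrm{End}(L)$, while fixing all other factors. Hence every identity to be checked ``factors through'' the $s$-th slot: applying both sides to a decomposable tensor $y_1\otimes\cdots\otimes y_n$, all tensor components $y_j$ with $j\ne s$ pass through untouched on both sides, so the asserted equalities are equivalent to the corresponding identities for the single-slot operators $ad_{\mu}(x_1,\dots,x_{n-1})\in\mathrm{End}(L)$ acting on $y_s\in L$. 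That is, the claim reduces to the statement that $(L,ad_{\mu})$ is a module over $(L,\mu)$ in the usual sense — equivalently, that $ad_{\mu}$ is the adjoint representation of the $n$-Lie algebra $L$. Concretely, I must check (i) $[ad_{\mu}(x_1,\dots,x_{n-1}),\,ad_{\mu}(y_1,\dots,y_{n-1})]=\sum_{k}ad_{\mu}(y_1,\dots,\mu(x_1,\dots,x_{n-1},y_k),\dots,y_{n-1})$, and (ii) $ad_{\mu}(x_1,\dots,x_{n-2},\mu(y_1,\dots,y_n))=\sum_{k=1}^n(-1)^{n-k}\,ad_{\mu}(y_1,\dots,\widehat{y_k},\dots,y_n)\,ad_{\mu}(x_1,\dots,x_{n-2},y_k)$ as operators on $L$. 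Both (i) and (ii), when evaluated on an arbitrary $z\in L$, are precisely rearrangements of the Filippov identity Eq.\eqref{eq:jacobi} applied with suitable choices of the $n-1$ ``outer'' elements and the inner bracket — for (i), bracket $\mu(x_1,\dots,x_{n-1},-)$ into the last slot of $\mu(y_1,\dots,y_{n-1},z)$; for (ii) use Eq.\eqref{eq:jacobi} with the roles of the two families arranged so that $\mu(y_1,\dots,y_n)$ appears in the innermost position. I would write out (i) and (ii) on a test element $z$, substitute Eq.\eqref{eq:jacobi}, and observe the terms match after re-indexing and using the skew-symmetry Eq.\eqref{eq:skew1}; the skew-symmetry also gives that $\rho^{\mu}_s$ is well-defined on $L^{\wedge(n-1)}$ (not merely $L^{\otimes(n-1)}$).

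**Main obstacle.** The only real subtlety is bookkeeping: matching the signs $(-1)^{n-k}$ and the omitted-index conventions in the module axioms against the signs in Eq.\eqref{eq:jacobi}, and confirming that the ``slot-$s$'' localization is valid for \emph{every} $s$ uniformly (since the untouched factors genuinely cancel on both sides, the value of $s$ is immaterial once the single-slot case is settled). There is no conceptual difficulty beyond the known fact that $ad_{\mu}$ is a representation of an $n$-Lie algebra; the proof is essentially the observation that tensoring a representation into one fixed slot of $L^{\otimes n}$, with identities elsewhere, again yields a representation. I expect the write-up to consist of: (1) reduce to decomposable tensors and peel off the inert factors; (2) cite/verify that $ad_{\mu}$ satisfies the two module axioms via Eq.\eqref{eq:jacobi} and Eq.\eqref{eq:skew1}; (3) conclude. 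The sign-tracking in step (2) is where care is needed, but it is routine.
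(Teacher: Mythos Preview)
Your proposal is correct and follows essentially the same approach as the paper: both reduce the module axioms for $\rho^{\mu}_s$ to the corresponding identities for $ad_{\mu}$ acting on the single $s$-th tensor factor, and then invoke the Filippov identity Eq.\eqref{eq:jacobi}. The paper carries out the reduction at the level of operators (writing $[\rho^{\mu}_s,\rho^{\mu}_s]=1\otimes\cdots\otimes[ad_{\mu},ad_{\mu}]\otimes\cdots\otimes 1$ directly) rather than evaluating on decomposable tensors, but this is only a cosmetic difference.
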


\begin{proof} For all $x_1, \cdots, x_{n-1}, z_1, \cdots, z_{n-1}, y_1, \cdots, y_n\in L,$ and $1\leq s\leq n$, by Eq.\eqref{eq:rho},

\vspace{3mm}$[\rho^{\mu}_s(x_1, \cdots, x_{n-1}), \rho^{\mu}_s(z_1, \cdots, z_{n-1})]$
\\
$
=\underbrace{1\otimes \cdots\otimes 1}_{s-1}\otimes
[ad_{\mu}(x_1, \cdots, x_{{n-1}}), ad_{\mu}(z_1, \cdots, z_{{n-1}})]\otimes \underbrace{1\cdots \otimes 1}_{n-s}$
\\
$
=\underbrace{1\otimes \cdots\otimes 1}_{s-1}\otimes
\sum\limits_{t=1}^{n-1}ad_{\mu}(z_1, \cdots, \mu(x_1, \cdots, x_{n-1}, z_t), \cdots, z_{{n-1}}) \otimes \underbrace{1\cdots \otimes 1}_{n-s}$
\\
$
=\sum\limits_{t=1}^{n-1}(\underbrace{1\otimes \cdots\otimes 1}_{s-1}\otimes
ad_{\mu}(z_1, \cdots, \mu(x_1, \cdots, x_{n-1}, z_t), \cdots, z_{{n-1}}) \otimes \underbrace{1\cdots \otimes 1}_{n-s})$
\\
$
=\sum\limits_{t=1}^{n-1}\rho^{\mu}_s(z_1, \cdots, \mu(x_1, \cdots, x_{n-1}, z_t), \cdots, z_{{n-1}}).$

\vspace{5mm}
$\rho^{\mu}_s(\mu(y_1, \cdots, y_n), x_1, \cdots, x_{n-2})$
\\
$
=\underbrace{1\otimes \cdots\otimes 1}_{s-1}\otimes
ad_{\mu}(\mu(y_1, \cdots, y_n), x_1, \cdots, x_{n-2})\otimes \underbrace{1\cdots \otimes 1}_{n-s}$
\\
$
=\underbrace{1\otimes \cdots\otimes 1}_{s-1}\otimes
\sum\limits_{t=1}^n (-1)^{n-t}ad_{\mu}(y_1, \cdots, \widehat{y_t}, \cdots, y_n)ad_{\mu}(x_1, \cdots, x_{n-2},y_t)\otimes \underbrace{1\cdots \otimes 1}_{n-s}$
\\
$
=\sum\limits_{t=1}^n (-1)^{n-t}\rho^{\mu}_s(y_1, \cdots, \widehat{y_t}, \cdots, y_n)\rho^{\mu}_s(x_1, \cdots, x_{n-2},y_t).$
\\
The result follows.

\end{proof}

Now we  give the definition of $n$-Lie bialgebra.

\begin{defn}\label{def:bialg}
An $n$-Lie bialgebra is a triple $(L,\mu,\Delta)$ which satisfies that\\
$(1)$ $(L,\mu)$ is an $n$-Lie algebra,\\
$(2)$ $(L,\Delta)$ is an $n$-Lie coalgebra,\\
$(3)$ $\Delta$ is a conformal qusi-$1$-cocycle on $n$-Lie algebra $(L, \mu)$ associative to $(L^{\otimes n}, \rho^{\mu}_s)$  for $s=1, \cdots, n$.
\end{defn}

From Eq. \eqref{eq:cocyc2}, Definition \ref{def:bialg} and Theorem \ref{thm:rhorepre}, $\mu, \Delta$ satisfy that for all, $x_1, \cdots, x_n\in L,$

\begin{equation}\label{eq:bialg1}
\Delta\mu(x_{1},\cdots,x_{n})=\sum\limits_{s=1}^{n}\sum\limits_{k=1}^n(-1)^{n-k}\rho^{\mu}_s(x_1, \cdots, \widehat{x_k}, \cdots, x_n)\Delta(x_k)
\end{equation}

\hspace{2cm}$
=\sum\limits_{s=1}^{n}\sum\limits_{k=1}^n(-1)^{n-k}(\underbrace{1\otimes \cdots \otimes 1}_{s-1}\otimes ad_{\mu}(x_1, \cdots, \widehat{x_k}, \cdots, x_{{n}})\otimes \underbrace{1\otimes \cdots\otimes 1}_{n-s})\Delta(x_k).
$

\begin{exam} Let $L$ be an $(n+1)$-dimensional vector space with a basis $x_{1},x_{2},\cdots,x_{n+1}.$ Define linear multiplication
$\mu: L^{\wedge n}\rightarrow L,$  and $\Delta: L\rightarrow L^{\wedge {n}},$  by

$$\mu(x_{1},x_{3},\cdots,x_{n+1})=x_{1}, \quad \mu(x_{2},x_{3},\cdots,x_{n+1})=x_{2}, \quad ~~\text{ and others are zero,}$$

$$\Delta(x_{1})=x_{3}\wedge x_{2} \wedge x_{4}\wedge\cdots\wedge x_{n+1}, \quad
\Delta(x_{3})=x_{1}\wedge x_{2} \wedge x_{4}\wedge\cdots\wedge x_{n+1}, ~\Delta(x_{j})=0, j\neq 1, 3.$$
Then the triple $(L,\mu,\Delta)$ is an $(n+1)$-dimensional $n$-Lie bialgebra.
\end{exam}

In fact, by a direct computation, $(L, \mu)$ and $(L^*, \Delta^*)$ are $n$-Lie algebras.
From

\vspace{2mm}$\Delta\mu(x_{1},x_{3},x_{4}\cdots,x_{n+1})=\Delta(x_{1})=x_{3}\wedge x_{2} \wedge x_{4}\wedge\cdots\wedge x_{n+1},$

\vspace{2mm}$\sum\limits_{s=1}^{n}(-1)^{n-1}\rho^{\mu}_s(x_3, \cdots,  x_{n+1})\Delta(x_1)+$$\sum\limits_{s=1}^{n}\sum\limits_{k=2}^{n}(-1)^{n-k}\rho^{\mu}_s(x_1, x_3, \cdots, \widehat{x_{k+1}}, \cdots  x_{n+1})\Delta(x_{k+1})
$
\\
$
=\sum\limits_{s=1}^n(-1)^{n-1}(1\otimes \cdots\otimes ad_{\mu}(x_3, \cdots, x_{n+1})\otimes \cdots\otimes 1)(x_{3}\wedge x_{2} \wedge x_{4}\wedge\cdots\wedge x_{n+1})
$
\\
$
+\sum\limits_{s=1}^n\sum\limits_{k=2}^n(-1)^{n-k}(1\otimes \cdots\otimes ad_{\mu}(x_1, x_3, \cdots, \widehat{x_{k+1}}, \cdots, x_{n+1})\otimes \cdots\otimes 1)\Delta(x_{k+1})$
\\
$
=\sum\limits_{s=1}^n(-1)^{n-1}(1\otimes \cdots\otimes 1\otimes ad_{\mu}(x_3, \cdots, x_{n+1})\otimes 1\otimes \cdots\otimes 1)(x_{3}\wedge x_{2} \wedge x_{4}\wedge\cdots\wedge x_{n+1})
$
\\
$
+\sum\limits_{s=1}^n(-1)^{n}(1\otimes \cdots \otimes 1\otimes ad_{\mu}(x_1, x_4, \cdots, x_{n+1})\otimes \cdots\otimes 1)(x_1\wedge x_2 \wedge x_{4}\wedge\cdots\wedge x_{n+1})$
\\
$
=x_3\wedge x_2 \wedge x_{4}\wedge\cdots\wedge x_{n+1}.
$

\vspace{2mm}$\Delta\mu(x_{2},x_{3},x_{4}\cdots,x_{n+1})=\Delta(x_{2})=0,$

\vspace{2mm}$\sum\limits_{s=1}^{n}\sum\limits_{k=1}^{n}(-1)^{n-k}\rho^{\mu}_s(x_2, \cdots, \widehat{x_{k+1}}, \cdots,  x_{n+1})\Delta(x_{k+1})
$
$
=\sum\limits_{s=1}^{n}(-1)^{n}\rho^{\mu}_s(x_2,  x_4, \cdots,  x_{n+1})\Delta(x_{3})
$
\\
$
=\sum\limits_{s=1}^{n}(-1)^{n}\rho^{\mu}_s(x_2,x_4, \cdots,  x_{n+1})(x_1\wedge x_2\wedge x_4\wedge\cdots\wedge x_{n+1})=0.
$

\vspace{2mm}For subset $\{ y_1, \cdots,, y_n\}$ is equal to subset $\{ x_1, x_2, x_4, \cdots, x_{n+1}\},$

$$\Delta\mu(y_1, \cdots, y_{n})=\sum\limits_{s=1}^{n}\sum\limits_{k=1}^{n}(-1)^{n-k}\rho^{\mu}_s(y_1, \cdots, \widehat{y_{k}}, \cdots,  y_{n})\Delta(y_{k})=0.
$$

Therefore, $(L, \mu, \Delta)$ is an $n$-Lie bialgebra.

Now we describe the structure of $n$-Lie bialgebras by means of structural constants.

\begin{theorem}\label{thm:bialgconst}
Let $ L $ be an $m$-dimensional  vector space over a field $\mathbb F$ with a basis $ x_1, \cdots, x_m $, \quad $\mu: L^{\wedge n}\rightarrow L$ \quad and \quad $\Delta: L \rightarrow L^{\wedge n}$ be linear maps, and suppose
\begin{equation}\label{eq:biconst}
\mu(x_{i_{1}},x_{i_{2}},\cdots,x_{i_{n}})=\sum_{l=1}^{m}c_{i_{1}\cdots i_{n}}^{l}x_{l}, \quad \Delta(x_{l})=\sum_{i_1 \cdots i_n}a_{l}^{i_{1}\cdots i_{n}}x_{i_{1}}\otimes \cdots\otimes  x_{i_{n}},
\end{equation}
where $c_{i_{1}\cdots i_{n}}^{l}, a_{l}^{j_{1}\cdots j_{n}}\in \mathbb F,$ $1\leq i_{1}, \cdots, i_{n}, j_1, \cdots, j_n\leq m,$
and  for all $\sigma\in S_n$,
$$c_{i_{1}\cdots i_{n}}^{l}=sign ~ c_{i_{\sigma(1)}\cdots i_{\sigma(n)}}^{l},\quad  a_{l}^{i_{1}\cdots i_{n}}=sign ~ a_{l}^{i_{\sigma(1)}\cdots i_{\sigma(n)}}.$$
Then $(L, \mu, \Delta)$ is an $n$-Lie bialgebra if and only if  constants
$$\Big\{~~ c_{i_1\cdots i_n}^l, \quad a_{l}^{i_{1}\cdots i_{n}}~| ~ 1\leq i_1, \cdots i_n, l\leq m ~~ \Big\} $$ satisfy that  for all $1\leq i_1< \cdots < i_n\leq m, $ and $1\leq j_1 < \cdots < j_n\leq m$,

\begin{equation}\label{eq:lset3}
\sum\limits_{t=1}^mc_{j_{1}\cdots j_{n}}^{t}c_{i_{1}\cdots i_{n-1}t}^{k}-\sum\limits_{t=1}^m\sum\limits_{s=1}^{n}(-1)^{n-s}c_{i_{1}\cdots i_{n-1}j_{s}}^tc^k_{j_{1}\cdots j_{ s-1}j_{s+1}\cdots j_{n}t}=0,
\end{equation}
\begin{equation}\label{eq:lset4}
\sum\limits_{t=1}^m a^{j_{1}\cdots j_{n}}_{t}a^{i_{1}\cdots i_{n-1}t}_{k}-\sum\limits_{t=1}^m\sum\limits_{s=1}^{n}(-1)^{n-s}a^{i_{1}\cdots i_{n-1}j_{s}}_ta_k^{j_{1}\cdots j_{ s-1}j_{s+1}\cdots j_{n}t}=0,
\end{equation}

\begin{equation}\label{eq:biset5}
\sum_{l=1}^{m}c_{i_{1}\cdots i_{n}}^{l}a_{l}^{j_{1}\cdots j_{n}}
=\sum\limits_{s=1}^{n}\sum\limits_{k=1}^n(-1)^{n-k}\sum\limits_{r=1}^ma^{j_1\cdots j_{s-1}r j_{s+1}\cdots j_n}_{i_k}\Big(\sum\limits_{t=1}^s(-1)^{s-t}c_{i_1\cdots \widehat{i_k}\cdots i_n r}^{j_t}+\sum\limits_{t=1}^{n-s}(-1)^{t}c_{i_1\cdots \widehat{i_k}\cdots i_n r}^{j_{s+t}}\Big).
\end{equation}

For the case $n=2$, we have
\begin{equation}\label{eq:2exten}
\sum_{l=1}^{m}c_{i_{1}i_{2}}^{l}a_{l}^{j_{1}j_{2}}=\sum\limits_{r=1}^m\Big( a^{r j_{2}}_{i_1}\Big(-c_{i_2 r}^{j_1}+c_{ i_2 r}^{j_{2}}\Big)+a^{j_1r}_{i_1}\Big(c_{i_2 r}^{j_1}-c_{i_2 r}^{j_2}\Big)+a^{r j_{2}}_{i_2}\Big(c_{i_1 r}^{j_1}-c_{i_1r}^{j_{2}}\Big)
+a^{j_1r}_{i_2}\Big(-c_{i_1r}^{j_1}+c_{i_1r}^{j_2}\Big)\Big).
\end{equation}

\end{theorem}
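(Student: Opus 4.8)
The plan is to reduce the bialgebra axioms of Definition \ref{def:bialg} to three families of scalar equations by pairing everything against dual basis vectors, exactly as was done in the computations following Theorem \ref{thm:LandC}. Conditions (1) and (2) of Definition \ref{def:bialg} are handled first and separately: by Theorem \ref{thm:Lcons}, condition (1) that $(L,\mu)$ is an $n$-Lie algebra is equivalent to Eq.\eqref{eq:Lset1} together with Eq.\eqref{eq:lset2}, which is precisely Eq.\eqref{eq:lset3}; and by Theorem \ref{thm:LandC} together with the remark that $a_k^{i_1\cdots i_n}=c_k^{i_1\cdots i_n}$ from Eq.\eqref{eq:CCL} (applied now with the constants $a$ in place of $c$), condition (2) that $(L,\Delta)$ is an $n$-Lie coalgebra is equivalent to $(L^*,\Delta^*)$ being an $n$-Lie algebra, which by Theorem \ref{thm:Lcons} translates into the antisymmetry of $a_l^{i_1\cdots i_n}$ (already assumed in the hypotheses) together with Eq.\eqref{eq:lset4}. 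So only condition (3) remains to be turned into Eq.\eqref{eq:biset5}.

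For condition (3) I would expand the conformal $1$-cocycle identity Eq.\eqref{eq:bialg1} on basis elements. Taking $x_1,\dots,x_n$ to be basis vectors $x_{i_1},\dots,x_{i_n}$ with $i_1<\cdots<i_n$, the left-hand side is $\Delta\mu(x_{i_1},\dots,x_{i_n})=\sum_{l}c_{i_1\cdots i_n}^l\Delta(x_l)=\sum_l c_{i_1\cdots i_n}^l\sum a_l^{j_1\cdots j_n}x_{j_1}\otimes\cdots\otimes x_{j_n}$, so that pairing with $x^{j_1}\otimes\cdots\otimes x^{j_n}$ (for a fixed increasing multi-index $j_1<\cdots<j_n$) gives, after accounting for the sum over all permutations of the $j$'s weighted by signs, the coefficient $\sum_l c_{i_1\cdots i_n}^l a_l^{j_1\cdots j_n}$ — the left-hand side of Eq.\eqref{eq:biset5}. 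For the right-hand side I would compute, for each fixed $s$ and $k$, the action of $\rho^\mu_s(x_{i_1},\dots,\widehat{x_{i_k}},\dots,x_{i_n})$ on $\Delta(x_{i_k})=\sum a_{i_k}^{p_1\cdots p_n}x_{p_1}\otimes\cdots\otimes x_{p_n}$; by the formula in Eq.\eqref{eq:rho} this replaces the $s$-th tensor slot $x_{p_s}$ by $\mu(x_{i_1},\dots,\widehat{x_{i_k}},\dots,x_{i_n},x_{p_s})=\sum_l c_{i_1\cdots\widehat{i_k}\cdots i_n p_s}^l x_l$. Pairing the result with $x^{j_1}\otimes\cdots\otimes x^{j_n}$ then forces $p_t=j_t$ for $t\neq s$ and $l=j_s$ in the untwisted term, but one must also collect the contributions in which the newly produced index $l$ lands in a different slot after anti-symmetrization of the $x_{p_t}$'s — this is the source of the inner alternating sums $\sum_{t=1}^s(-1)^{s-t}c^{j_t}_{\cdots}$ and $\sum_{t=1}^{n-s}(-1)^t c^{j_{s+t}}_{\cdots}$ with the summation index $r$ playing the role of the old $p_s$. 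Carefully bookkeeping these sign contributions, using the antisymmetry Eq.\eqref{eq:Lset1} of $c$ and the antisymmetry of $a$, yields exactly the right-hand side of Eq.\eqref{eq:biset5}.

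The main obstacle is precisely this sign bookkeeping in the right-hand side: when the slot index $l$ produced by $\mu$ is anti-symmetrized together with the remaining indices $j_1,\dots,\widehat{j_s},\dots,j_n$ sitting in the other $n-1$ slots, one gets a signed sum over where $j_s$ (equivalently the free index $r$) can be inserted, and getting the signs $(-1)^{s-t}$ and $(-1)^t$ and the ranges of $t$ right requires care — this is the standard but delicate combinatorial step that produces the two inner sums in Eq.\eqref{eq:biset5}. Everything else is routine multilinear algebra. Finally, for $n=2$ I would simply specialize Eq.\eqref{eq:biset5}: with $n=2$ the outer sum runs over $s\in\{1,2\}$ and $k\in\{1,2\}$, and for each the inner alternating sums collapse to at most a single term, producing the four summands displayed in Eq.\eqref{eq:2exten} (the two terms with $a^{rj_2}$ coming from $s=1$ and the two with $a^{j_1 r}$ from $s=2$, with signs as shown). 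This is a direct substitution and needs no further argument.
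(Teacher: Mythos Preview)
Your proposal is correct and follows essentially the same route as the paper: you reduce conditions (1) and (2) to Eq.\eqref{eq:lset3} and Eq.\eqref{eq:lset4} via Theorem~\ref{thm:Lcons} and Theorem~\ref{thm:LandC}, then expand both sides of the cocycle identity Eq.\eqref{eq:bialg1} on basis vectors and compare coefficients, finally specializing to $n=2$. The only cosmetic difference is that you phrase the coefficient extraction as pairing with dual basis tensors $x^{j_1}\otimes\cdots\otimes x^{j_n}$, whereas the paper works directly with the wedge expansion and reads off the coefficient of $x_{j_1}\wedge\cdots\wedge x_{j_n}$; these are equivalent and the sign bookkeeping you flag as the delicate step is exactly what the paper's computation carries out.
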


\begin{proof} By Theorem \ref{thm:Lcons} and Theorem \ref{thm:LandC}, $(L, \mu)$ is an $n$-Lie algebra and $(L, \Delta)$ is an $n$-Lie coalgebra if and only if Eq. \eqref{eq:lset3} and Eq. \eqref{eq:lset4} hold, respectively.
For all $1\leq i_1<\cdots < i_n\leq m$,

$$\Delta\mu(x_{i_{1}},x_{i_{2}},\cdots,x_{i_{n}})=\sum\limits_{l=1}^{m}c_{i_{1}\cdots i_{n}}^{l}\Delta(x_{l})=\sum_{l=1}^{m}\sum_{ j_{1}<\cdots< j_{n}}c_{i_{1}\cdots i_{n}}^{l}a_{l}^{j_{1}\cdots j_{n}}x_{j_{1}}\wedge\cdots\wedge x_{j_{n}},$$

\vspace{5mm}$
\sum\limits_{s=1}^{n}\sum\limits_{k=1}^n(-1)^{n-k}\rho^{\mu}_s(x_{i_1}, \cdots, \widehat{x_{i_k}}, \cdots, x_{i_n})\Delta(x_{i_k})$
\\
$
=\sum\limits_{s=1}^{n}\sum\limits_{k=1}^n(-1)^{n-k}\sum\limits_{t_1<\cdots< t_n}a^{t_1\cdots t_n}_{i_k}\rho^{\mu}_s(x_{i_1}, \cdots, \widehat{x_{i_k}}, \cdots, x_{i_n})(x_{t_1}\wedge \cdots \wedge x_{t_n})$
\\
$
=\sum\limits_{s=1}^{n}\sum\limits_{k=1}^n(-1)^{n-k}\sum\limits_{t_1<\cdots< t_n}a^{t_1\cdots t_n}_{i_k}(x_{t_1}\wedge \cdots \wedge x_{t_{s-1}}\wedge \mu(x_{i_1},\cdots \widehat{x_{i_k}}, \cdots, x_{i_n}, x_{t_s}) \wedge x_{t_{s+1}}\wedge \cdots \wedge x_{t_n})$
\\
$
=\sum\limits_{s=1}^{n}\sum\limits_{k=1}^n(-1)^{n-k}\sum\limits_{t_1<\cdots< t_n}\sum\limits_{l=1}^m c_{i_1\cdots \widehat{i_k}\cdots i_n t_s}^la^{t_1\cdots t_n}_{i_k}(x_{t_1}\wedge \cdots \wedge x_{t_{s-1}}\wedge x_l\wedge x_{t_{s+1}} \wedge \cdots \wedge x_{t_n}).
$

\vspace{2mm}Therefore, for all $1\leq j_1< \cdots <j_n\leq m,$ and $1\leq i_1< \cdots <i_n\leq m,$

$$
\sum_{l=1}^{m}c_{i_{1}\cdots i_{n}}^{l}a_{l}^{j_{1}\cdots j_{n}}
=\sum\limits_{s=1}^{n}\sum\limits_{k=1}^n(-1)^{n-k}\sum\limits_{r=1}^ma^{j_1\cdots j_{s-1}r j_{s+1}\cdots j_n}_{i_k}\Big(\sum\limits_{t=1}^s(-1)^{s-t}c_{i_1\cdots \widehat{i_k}\cdots i_n r}^{j_t}+\sum\limits_{t=1}^{n-s}(-1)^{t}c_{i_1\cdots \widehat{i_k}\cdots i_n r}^{j_{s+t}}\Big).
$$

For the case $n=2$, we have

$$
\sum_{l=1}^{m}c_{i_{1}i_{2}}^{l}a_{l}^{j_{1}j_{2}}
=\sum\limits_{s=1}^{2}\sum\limits_{k=1}^2(-1)^{2-k}\sum\limits_{r=1}^ma^{j_1\cdots j_{s-1}r j_{s+1}\cdots j_2}_{i_k}\Big(\sum\limits_{t=1}^s(-1)^{s-t}c_{i_1\cdots \widehat{i_k}\cdots i_2 r}^{j_t}+\sum\limits_{t=1}^{2-s}(-1)^{t}c_{i_1\cdots \widehat{i_k}\cdots i_2 r}^{j_{s+t}}\Big).
$$
$$
=\sum\limits_{k=1}^2(-1)^{2-k} \sum\limits_{r=1}^m\Big(a^{r j_{2}}_{i_k}\Big(c_{i_1\cdots \widehat{i_k}\cdots i_2 r}^{j_1}+(-1)^{1}c_{i_1\cdots \widehat{i_k}\cdots i_2 r}^{j_{2}}\Big)+a^{j_1r}_{i_k}\Big(-c_{i_1\cdots \widehat{i_k}\cdots i_2 r}^{j_1}+c_{i_1\cdots \widehat{i_k}\cdots i_2 r}^{j_2}\Big)\Big)
$$
$$
=-\sum\limits_{r=1}^m\Big( a^{r j_{2}}_{i_1}\Big(c_{i_2 r}^{j_1}-c_{ i_2 r}^{j_{2}}\Big)+a^{j_1r}_{i_1}\Big(-c_{i_2 r}^{j_1}+c_{i_2 r}^{j_2}\Big)\Big)+\sum\limits_{r=1}^ma^{r j_{2}}_{i_2}\Big(\Big(c_{i_1 r}^{j_1}-c_{i_1r}^{j_{2}}\Big)
+a^{j_1r}_{i_2}\Big(-c_{i_1r}^{j_1}+c_{i_1r}^{j_2}\Big)\Big)
$$
$$
=\sum\limits_{r=1}^m\Big( a^{r j_{2}}_{i_1}\Big(-c_{i_2 r}^{j_1}+c_{ i_2 r}^{j_{2}}\Big)+a^{j_1r}_{i_1}\Big(c_{i_2 r}^{j_1}-c_{i_2 r}^{j_2}\Big)+a^{r j_{2}}_{i_2}\Big(c_{i_1 r}^{j_1}-c_{i_1r}^{j_{2}}\Big)
+a^{j_1r}_{i_2}\Big(-c_{i_1r}^{j_1}+c_{i_1r}^{j_2}\Big)\Big).
$$

The proof is complete.

\end{proof}

\begin{theorem}\label{thm:cobialg} Let $(L,\mu,\Delta)$ be an $n$-Lie bialgebra. Then the triple $(L^{\ast},\Delta^{\ast},\mu^{\ast})$ is an $n$-Lie bialgebra,
and it is called the dual $n$-Lie bialgebra of $(L, \mu, \Delta).$
\end{theorem}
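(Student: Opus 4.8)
The plan is to check the three conditions of Definition~\ref{def:bialg} for the triple $(L^{\ast},\Delta^{\ast},\mu^{\ast})$, leaning on the dualization results already proved. Note first that the maps have the right shape: since $L$ is finite dimensional, $\mu\colon L^{\wedge n}\to L$ dualizes to $\mu^{\ast}\colon L^{\ast}\to (L^{\ast})^{\wedge n}$ and $\Delta\colon L\to L^{\wedge n}$ dualizes to $\Delta^{\ast}\colon (L^{\ast})^{\wedge n}\to L^{\ast}$, so the dual triple is of the expected type.

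For condition $(1)$, $(L^{\ast},\Delta^{\ast})$ is an $n$-Lie algebra by Theorem~\ref{thm:LandC} applied to the $n$-Lie coalgebra $(L,\Delta)$. For condition $(2)$, I would apply Theorem~\ref{thm:LandC} again, this time with $L^{\ast}$ in the role of $L$ and $\mu^{\ast}$ in the role of $\Delta$: it says $(L^{\ast},\mu^{\ast})$ is an $n$-Lie coalgebra precisely when $\big((L^{\ast})^{\ast},(\mu^{\ast})^{\ast}\big)$ is an $n$-Lie algebra. Finite dimensionality identifies $(L^{\ast})^{\ast}$ with $L$ and $(\mu^{\ast})^{\ast}$ with $\mu$, and $(L,\mu)$ is an $n$-Lie algebra by hypothesis; hence $(L^{\ast},\mu^{\ast})$ is an $n$-Lie coalgebra.

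The heart of the matter is condition $(3)$: that $\mu^{\ast}$ is a conformal $1$-cocycle on $(L^{\ast},\Delta^{\ast})$ associated to the modules $\big((L^{\ast})^{\otimes n},\rho^{\Delta^{\ast}}_s\big)$, $1\le s\le n$ (these are $(L^{\ast},\Delta^{\ast})$-modules by Theorem~\ref{thm:rhorepre}). I would argue with structural constants, as in Theorem~\ref{thm:bialgconst}. Fix a basis $x_1,\dots,x_m$ of $L$, let $x^1,\dots,x^m$ be the dual basis, and write $\mu,\Delta$ through constants $c^{l}_{i_1\cdots i_n}$, $a^{i_1\cdots i_n}_l$ as in Eq.\eqref{eq:biconst}. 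Unwinding the definition of the dual maps (the same computation as the one yielding Eq.\eqref{eq:CCL} in the proof of Theorem~\ref{thm:LandC}) gives $\Delta^{\ast}(x^{i_1},\dots,x^{i_n})=\sum_{l}a^{i_1\cdots i_n}_l\,x^l$ and $\mu^{\ast}(x^l)=\sum_{i_1\cdots i_n}c^{l}_{i_1\cdots i_n}\,x^{i_1}\otimes\cdots\otimes x^{i_n}$; that is, passing to the dual triple simply interchanges the two families of structural constants, $c\leftrightarrow a$. Next I would carry out, inside $(L^{\ast})^{\otimes n}$ and with the ordered dual basis, the exact computation of the proof of Theorem~\ref{thm:bialgconst} that produced Eq.\eqref{eq:biset5}; this writes the cocycle condition for $(L^{\ast},\Delta^{\ast},\mu^{\ast})$ as an identity among the constants, and by the interchange $c\leftrightarrow a$ that identity is nothing but Eq.\eqref{eq:biset5} for $(L,\mu,\Delta)$ with the two index tuples $i_1<\cdots<i_n$ and $j_1<\cdots<j_n$ swapped. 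Since Eq.\eqref{eq:biset5} is assumed for all such tuples, the swapped identity holds too, and Theorem~\ref{thm:bialgconst} then gives that $(L^{\ast},\Delta^{\ast},\mu^{\ast})$ is an $n$-Lie bialgebra.

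The step I expect to be the real obstacle is the last one: verifying that the right-hand side of Eq.\eqref{eq:biset5} is genuinely symmetric under the exchange of $(c,\text{the }i\text{-indices})$ with $(a,\text{the }j\text{-indices})$. This forces one to redo the bookkeeping of the proof of Theorem~\ref{thm:bialgconst} for the dual triple, tracking the signs $(-1)^{n-k}$, the alternating reordering signs coming from moving a factor into a wedge product, and the omitted indices $\widehat{i_k}$. A coordinate-free shortcut that sidesteps most of this is to observe that $\Delta\circ\mu\colon L^{\otimes n}\to L^{\otimes n}$ and $\mu^{\ast}\circ\Delta^{\ast}\colon (L^{\ast})^{\otimes n}\to (L^{\ast})^{\otimes n}$ are transpose maps, rewrite Eq.\eqref{eq:bialg1} as $\Delta\circ\mu=F$ for the multilinear operator $F$ on its right-hand side, and check that the transpose $F^{\ast}$ equals the cocycle operator of $(\Delta^{\ast},\mu^{\ast})$; then $\mu^{\ast}\circ\Delta^{\ast}=(\Delta\circ\mu)^{\ast}=F^{\ast}$ is the desired compatibility. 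Unwinding the $\rho^{\mu}_s$, however, this reduces to the same index identity, so either route ends at the same place.
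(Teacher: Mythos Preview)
Your handling of conditions (1) and (2) matches the paper's. For condition (3), however, your structural-constant route does not close as written. After substituting $\tilde c_{i_1\cdots i_n}^{\,l}=a_l^{\,i_1\cdots i_n}$ and $\tilde a_l^{\,j_1\cdots j_n}=c_{j_1\cdots j_n}^{\,l}$ into Eq.\eqref{eq:biset5}, the compatibility for the dual triple reads
\[
\sum_l a_l^{\,i_1\cdots i_n} c_{j_1\cdots j_n}^{\,l}
=\sum_{s,k}(-1)^{n-k}\sum_r c_{j_1\cdots j_{s-1} r j_{s+1}\cdots j_n}^{\,i_k}\Big(\textstyle\sum_t(-1)^{s-t}a_{j_t}^{\,i_1\cdots\hat i_k\cdots i_n r}+\cdots\Big),
\]
which is exactly Eq.\eqref{eq:biset6}. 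This is \emph{not} ``Eq.\eqref{eq:biset5} with the tuples $i$ and $j$ swapped'': swapping $i\leftrightarrow j$ in Eq.\eqref{eq:biset5} produces terms of the shape $a_{j_k}^{\,i_1\cdots i_{s-1} r i_{s+1}\cdots i_n}\cdot c_{j_1\cdots\hat j_k\cdots j_n r}^{\,i_t}$, where $r$ is inserted among the $i$'s and $j_k$ is omitted, whereas in Eq.\eqref{eq:biset6} the $r$ is inserted among the $j$'s and $i_k$ is omitted. So the left-hand sides agree but the right-hand sides do not coincide termwise, and the implication Eq.\eqref{eq:biset5}$\Rightarrow$Eq.\eqref{eq:biset6} is not a relabelling; in fact the paper records this implication as Corollary~\ref{cor:1} and obtains it \emph{from} the present theorem, so invoking it here would be circular.

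What the paper actually does is your ``coordinate-free shortcut'', carried out in full: it pairs $\mu^{\ast}\Delta^{\ast}(f_1,\dots,f_n)$ against $x_1\otimes\cdots\otimes x_n$, uses the cocycle identity \eqref{eq:bialg1} for $\Delta$ to rewrite $\langle f_1\otimes\cdots\otimes f_n,\Delta\mu(x_1,\dots,x_n)\rangle$, and then moves each $ad_\mu(x_1,\dots,\hat x_k,\dots,x_n)$ across the pairing to $ad_\mu^{\ast}$ acting on the $f_s$-slot, finally recognising the result as $\sum_{s,k}(-1)^{n-s}\rho_k^{\Delta^{\ast}}(f_1,\dots,\hat f_s,\dots,f_n)\mu^{\ast}(f_s)$ paired with $x_1\otimes\cdots\otimes x_n$. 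This transpose computation is precisely what establishes that $F^{\ast}$ (in your notation) is the cocycle operator for $(\Delta^{\ast},\mu^{\ast})$; it is not equivalent to the index identity you describe, and it is the step that makes the proof go through. Your proposal would be complete if you replaced the symmetry claim by this pairing computation.
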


\begin{proof} Since $(L,\mu, \Delta)$ is an $n$-Lie bialgebra, by Theorem \ref{thm:LandC}, $(L^{\ast},\Delta^{\ast})$ is an $n$-Lie algebra
and $(L^{\ast},\mu^{\ast})$ is an $n$-Lie coalgebra, where $\Delta^{\ast}: L^{*\otimes n} \rightarrow L^*$, $\mu^{\ast}: L^* \rightarrow L^{*\otimes n}$.

We need to prove that $\mu^{\ast}:L^{\ast}\rightarrow L^{\ast\otimes n}$ satisfies identity \eqref{eq:bialg1},  that is,
for all $f_{1},\cdots,f_{n}\in L^{\ast},$
\begin{equation}\label{eq:cobialg}
\mu^{\ast}(\Delta^{\ast}(f_{1},\cdots,f_{n}))=\sum_{s=1}^{n}\sum_{k=1}^{n}(-1)^{n-k}\rho_{s}^{\Delta^{\ast}}(f_{1},\cdots,\widehat{f_{k}},\cdots f_{n})\mu^{\ast}(f_{k}),
\end{equation}
where $\rho_{s}^{\Delta^{\ast}}(f_1, \cdots, \widehat{f_k}, \cdots f_n)=\underbrace{1\otimes \cdots\otimes 1}_{s-1}\otimes ad_{\Delta^*}(f_1, \cdots, \widehat{f_k}, \cdots f_n)\otimes \underbrace{1\otimes \cdots\otimes 1}_{n-s}: A^{* \otimes n}\rightarrow A^{* \otimes n}.$

For all $x_{1},x_{2},\cdots,x_{n}\in L,$

$\langle \mu^{\ast}(\Delta^{\ast}(f_{1},\cdots,f_{n})),x_{1}\otimes\cdots\otimes x_{n}\rangle$
$=\langle\Delta^{\ast}(f_{1},\cdots,f_{n}),\mu(x_{1},x_{2},\cdots, x_{n})\rangle$\\
$=\langle f_{1}\otimes\cdots\otimes f_{n},\Delta\mu(x_{1},x_{2},\cdots, x_{n})\rangle$\\
$=\langle f_{1}\otimes\cdots\otimes f_{n},\sum\limits_{s=1}^{n}\sum\limits_{k=1}^{n}(-1)^{n-k}\rho_{s}^{\mu}(x_{1},\cdots,\widehat{x_{k}},\cdots x_{n})\Delta(x_{k})\rangle.
$

Suppose that $\Delta(x_{1})=\sum\limits_{1_j\cdots n_j}y_{1_{j}}^{1}\otimes\cdots\otimes y_{n_{j}}^{1},$ where
$y_{1_{j}}^{1},y_{2_{j}}^{1},\cdots,y_{n_{j}}^{1}\in L.$ Then

\begin{align*}
&\langle f_{1}\otimes\cdots\otimes f_{n},\quad\sum_{s=1}^{n}(-1)^{n-1}\rho_{s}^{\mu}(x_{2},\cdots,x_{n})\Delta(x_{1})\rangle\\
=&\langle f_{1}\otimes\cdots\otimes f_{n},\quad\sum_{s=1}^{n}(-1)^{n-1}\rho_{s}^{\mu}(x_{2},\cdots,x_{n})\Big(\sum\limits_{1_j\cdots n_j}y_{1_{j}}^{1}\otimes\cdots\otimes y_{n_{j}}^{1}\Big)\rangle\\
=&\langle f_{1}\otimes\cdots\otimes f_{n},\quad\sum_{s=1}^{n}(-1)^{n-1}\sum\limits_{1_j\cdots n_j}y_{1_{j}}^{1}\otimes \cdots\otimes ad_{\mu}(x_{2},\cdots,x_{n})(y_{s_{j}}^{1})\otimes\cdots\otimes y_{n_{j}}^{1}\rangle\\
=& \sum_{s=1}^{n}(-1)^{n-1}\sum\limits_{1_j\cdots n_j}\quad\langle f_{1}\otimes\cdots\otimes f_{n}, \quad y_{1_{j}}^{1}\otimes \cdots\otimes ad_{\mu}(x_{2},\cdots,x_{n})(y_{s_{j}}^{1})\otimes\cdots\otimes y_{n_{j}}^{1}\rangle\\
=&- \sum_{s=1}^{n}(-1)^{n-1}\sum\limits_{1_j\cdots n_j}\quad\langle f_{1}\otimes\cdots \otimes ad^*_{\mu}(x_{2},\cdots,x_{n})(f_s)\otimes \cdots \otimes f_{n}, \quad y_{1_{j}}^{1}\otimes \cdots\otimes y_{n_{j}}^{1}\rangle\\
=&-(-1)^{n-1}\sum_{s=1}^{n}\quad\langle f_{1}\otimes\cdots\otimes ad_{\mu}^{\ast}(x_{2},\cdots,x_{n})(f_{s})\otimes
\cdots\otimes f_{n}, \quad \Delta(x_{1})\rangle\\
=&-(-1)^{n-1}\sum_{s=1}^{n}\quad\langle \Delta^*(f_{1}, \cdots, ad_{\mu}^{\ast}(x_{2},\cdots,x_{n})(f_{s}),
\cdots, f_{n}), \quad x_{1}\rangle\\
=&-(-1)^{n-1}\sum_{s=1}^{n}(-1)^{n-s}\quad\langle ad_{\Delta^*}(f_{1},\cdots, \widehat{f_s}, \cdots, f_n)ad_{\mu}^{\ast}(x_{2},\cdots,x_{n})(f_{s})
, \quad x_{1}\rangle\\
=&-\sum_{s=1}^{n}(-1)^{-s-1}\quad\langle ad_{\Delta^*}(f_{1},\cdots, \widehat{f_s}, \cdots, f_n)ad_{\mu}^{\ast}(\widehat{x_1}, x_{2},\cdots,x_{n})(f_{s})
, \quad x_{1}\rangle.\\
\end{align*}

Then we have

$\langle f_{1}\otimes\cdots\otimes f_{n},\sum\limits_{s=1}^{n}\sum\limits_{k=1}^{n}(-1)^{n-k}\rho_{s}^{\mu}(x_{1},\cdots,\widehat{x_{k}},\cdots x_{n})\Delta(x_{k})\rangle$
\\$
=-\sum\limits_{k=1}^{n}\sum\limits_{s=1}^{n}(-1)^{-s-k}\quad\langle ad_{\Delta^*}(f_{1},\cdots, \widehat{f_s}, \cdots, f_n)ad_{\mu}^{\ast}(x_{1},\cdots,\widehat{x_{k}},\cdots x_{n})(f_{s}), \quad x_{k}\rangle$
\\
$=\sum\limits_{k=1}^{n}\sum\limits_{s=1}^{n}(-1)^{-s-k}\quad\langle ad_{\mu}^{\ast}(x_{1},\cdots,\widehat{x_{k}},\cdots x_{n})(f_{s}),\quad ad_{\Delta^*}^{\ast}(f_{1},\cdots, \widehat{f_s}, \cdots, f_n)x_{k}\rangle$
\\
$
=-\sum\limits_{k=1}^{n}\sum\limits_{s=1}^{n}(-1)^{-s-k}\quad\langle f_{s}, \quad ad_{\mu}(x_{1},\cdots,\widehat{x_{k}},\cdots x_{n})ad_{\Delta^*}^{\ast}(f_{1},\cdots, \widehat{f_s}, \cdots, f_n)x_{k}\rangle$
\\
$
=-\sum\limits_{k=1}^{n}\sum\limits_{s=1}^{n}(-1)^{-s-k}\quad\langle f_{s}, \quad \mu(x_{1},\cdots,\widehat{x_{k}},\cdots x_{n}, ad_{\Delta^*}^{\ast}(f_{1},\cdots, \widehat{f_s}, \cdots, f_n)x_{k})\rangle$
\\
$
=-\sum\limits_{k=1}^{n}\sum\limits_{s=1}^{n}(-1)^{-s-k}(-1)^{n-k}\quad\langle f_{s}, \quad \mu(x_{1},\cdots, ad_{\Delta^*}^{\ast}(f_{1},\cdots, \widehat{f_s}, \cdots, f_n)x_{k}, \cdots x_{n})\rangle$\\
$
=-\sum\limits_{k=1}^{n}\sum\limits_{s=1}^{n}(-1)^{n-s}\quad\langle \mu^*(f_{s}), \quad x_{1}\otimes\cdots\otimes  ad_{\Delta^*}^{\ast}(f_{1},\cdots, \widehat{f_s}, \cdots, f_n)x_{k}\otimes\cdots\otimes x_{n})\rangle$
\\
$=\sum\limits_{k=1}^{n}\sum\limits_{s=1}^{n}(-1)^{n-s}\langle \rho_k^{\Delta^*}(f_{1},\cdots, \widehat{f_s}, \cdots, f_n)\mu^{\ast}(f_{s}),\quad x_{1}\otimes\cdots\otimes x_{k}\otimes\cdots\otimes x_{n}\rangle.$

The identity \eqref{eq:cobialg} holds. The proof is complete.
\end{proof}

\begin{coro}\label{cor:1}
Let $ L $ be an $m$-dimensional  vector space over a field $\mathbb F$ with a basis $ x_1, \cdots, x_m $, \quad $\mu: L^{\wedge n}\rightarrow L$ \quad and \quad $\Delta: L \rightarrow L^{\wedge n}$ be linear maps, and suppose
$$
\mu(x_{i_{1}},x_{i_{2}},\cdots,x_{i_{n}})=\sum_{l=1}^{m}c_{i_{1}\cdots i_{n}}^{l}x_{l}, \quad \Delta(x_{l})=\sum_{i_1 \cdots i_n}a_{l}^{i_{1}\cdots i_{n}}x_{i_{1}}\otimes \cdots\otimes  x_{i_{n}},
$$
where $c_{i_{1}\cdots i_{n}}^{l}, a_{l}^{j_{1}\cdots j_{n}}\in \mathbb F,$ $1\leq i_{1}, \cdots, i_{n}, j_1, \cdots, j_n\leq m.$
and constants $$\Big\{~~ c_{i_1\cdots i_n}^l, \quad a_{l}^{i_{1}\cdots i_{n}}~| ~ 1\leq i_1, \cdots i_n, l\leq m ~~ \Big\} $$
satisfy Eq. \eqref{eq:lset3} and Eq. \eqref{eq:lset4}. Then Eq.\eqref{eq:biset5} holds if and only if the following identity holds
\begin{equation}\label{eq:biset6}
\sum_{l=1}^{m}a^{i_{1}\cdots i_{n}}_{l}c^{l}_{j_{1}\cdots j_{n}}
=\sum\limits_{s=1}^{n}\sum\limits_{k=1}^n(-1)^{n-k}\sum\limits_{r=1}^mc_{j_1\cdots j_{s-}rj_{s+1}\cdots j_n}^{i_k}\Big(\sum\limits_{t=1}^s(-1)^{s-t}a^{i_1\cdots \widehat{i_k}\cdots i_n r}_{j_t}+\sum\limits_{t=1}^{n-s}(-1)^{t}a^{i_1\cdots \widehat{i_k}\cdots i_n r}_{j_{s+t}}\Big).
\end{equation}

\end{coro}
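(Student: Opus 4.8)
The plan is to obtain Corollary~\ref{cor:1} from Theorem~\ref{thm:bialgconst} and Theorem~\ref{thm:cobialg} by passing to the dual, rather than transforming the identities \eqref{eq:biset5} and \eqref{eq:biset6} into one another by a direct index computation. Let $x^{1},\cdots,x^{m}$ be the dual basis of $x_{1},\cdots,x_{m}$. As in the proof of Theorem~\ref{thm:LandC}, the $n$-ary operation $\Delta^{*}\colon L^{*\otimes n}\rightarrow L^{*}$ is given by $\Delta^{*}(x^{i_{1}},\cdots,x^{i_{n}})=\sum_{l}a_{l}^{i_{1}\cdots i_{n}}x^{l}$, and since $\Delta(L)\subseteq L^{\wedge n}$ the constants $a_{l}^{i_{1}\cdots i_{n}}$ are alternating in $i_{1},\cdots,i_{n}$, so $\Delta^{*}$ is alternating; dually $\mu^{*}\colon L^{*}\rightarrow L^{*\otimes n}$ is given by $\mu^{*}(x^{l})=\sum_{i_{1}\cdots i_{n}}c_{i_{1}\cdots i_{n}}^{l}\,x^{i_{1}}\otimes\cdots\otimes x^{i_{n}}$, which lands in $L^{*\wedge n}$ because the $c_{i_{1}\cdots i_{n}}^{l}$ are alternating. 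Thus $(L^{*},\Delta^{*},\mu^{*})$ is of exactly the form treated in Theorem~\ref{thm:bialgconst}, with the two families of constants interchanged: $a_{l}^{i_{1}\cdots i_{n}}$ plays the role of $c_{i_{1}\cdots i_{n}}^{l}$ and $c_{i_{1}\cdots i_{n}}^{l}$ plays the role of $a_{l}^{i_{1}\cdots i_{n}}$.

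Next I apply Theorem~\ref{thm:bialgconst} to $(L^{*},\Delta^{*},\mu^{*})$ and perform this substitution of constants. Then the first of its three conditions becomes \eqref{eq:lset4} (the $n$-Lie algebra condition for $(L^{*},\Delta^{*})$, which holds by the standing hypothesis together with Theorem~\ref{thm:LandC}), the second becomes \eqref{eq:lset3} (the $n$-Lie coalgebra condition for $(L^{*},\mu^{*})$, equivalently the $n$-Lie algebra condition for $L^{**}\cong L$), and the third, the compatibility condition \eqref{eq:biset5}, becomes precisely \eqref{eq:biset6}. Hence, under the standing hypotheses \eqref{eq:lset3} and \eqref{eq:lset4}, Theorem~\ref{thm:bialgconst} yields two equivalences: $(L,\mu,\Delta)$ is an $n$-Lie bialgebra if and only if \eqref{eq:biset5} holds, and $(L^{*},\Delta^{*},\mu^{*})$ is an $n$-Lie bialgebra if and only if \eqref{eq:biset6} holds.

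It remains to link the two bialgebra properties. Theorem~\ref{thm:cobialg} already shows that if $(L,\mu,\Delta)$ is an $n$-Lie bialgebra then so is $(L^{*},\Delta^{*},\mu^{*})$; for the converse I use that $L$ is finite dimensional, so under the canonical isomorphism $L^{**}\cong L$ one has $(\Delta^{*})^{*}=\Delta$ and $(\mu^{*})^{*}=\mu$, and applying Theorem~\ref{thm:cobialg} to the $n$-Lie bialgebra $(L^{*},\Delta^{*},\mu^{*})$ returns $(L,\mu,\Delta)$. Chaining the three equivalences gives \eqref{eq:biset5} $\Longleftrightarrow$ \eqref{eq:biset6}, which is the assertion of the corollary.

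I expect the only delicate point to be the index bookkeeping in the second step: one must verify that the compatibility equation produced by Theorem~\ref{thm:bialgconst} for $L^{*}$ matches \eqref{eq:biset6} exactly — same position of the omitted index, of the summed index $r$, and of the two alternating sums — and not merely a relabelled form of \eqref{eq:biset5}. This discrepancy is exactly why the naive attempt of simply swapping $i\leftrightarrow j$ in \eqref{eq:biset5} does not visibly reproduce \eqref{eq:biset6}, and it is what makes the detour through the dual bialgebra the cleaner route; all remaining verifications (that $(L^{*},\Delta^{*})$ is an $n$-Lie algebra and $(L^{*},\mu^{*})$ an $n$-Lie coalgebra) are supplied directly by Theorems~\ref{thm:Lcons} and \ref{thm:LandC} from the hypotheses \eqref{eq:lset3} and \eqref{eq:lset4}.
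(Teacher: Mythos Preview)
Your proposal is correct and follows precisely the route the paper takes: the paper's own proof is the single sentence ``The result follows from Theorem~\ref{thm:bialgconst} and Theorem~\ref{thm:cobialg} directly,'' and what you have written is an explicit unpacking of that sentence, including the observation that the converse direction of Theorem~\ref{thm:cobialg} comes from finite-dimensionality and $L^{**}\cong L$.
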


\begin{proof}

The result follows from Theorem \ref{thm:bialgconst} and Theorem \ref{thm:cobialg} directly.

\end{proof}

\begin{defn}\label{defn:biequlent} Two $n$-Lie bialgebras $(L_{1},\mu_{1},\Delta_{1})$ and  $(L_{2}, \mu_{2}, \Delta_{2})$ are called equivalent if there exists linear  isomorphism $f: L_{1}\rightarrow L_{2}$ such that

$(1)$ $f: (L_{1},\mu_{1})\rightarrow (L_{2},\mu_{2})$ is an $n$-Lie algebra isomorphism, that is,  ~ for all  $ x_{1}, x_{2}, \cdots, x_{n}\in L_{1},$

$$f\mu_{1}(x_{1},x_{2},\cdots,x_{n})=\mu_{2}(f(x_{1}),f(x_{2}),\cdots,f(x_{n})).$$

$(2)$ $f: ( L_{1},\Delta_{1}))\rightarrow (L_{2}, \Delta_{2})$ is an $n$-Lie coalgebra isomorphism, that is, ~~ for all  $x\in L_{1},$

$$(\underbrace{f\otimes\cdots\otimes f}_{n})\Delta_{1}(x)=\Delta_{2}(f(x)). $$
\end{defn}

For a given $n$-Lie algebra $(L, \mu),$ in order to find all the $n$-Lie bialgebra structures on $L,$ we should find all the $n$-Lie coalgebra structures
on $L$ which  are compatible with the $n$-Lie algebra $L.$ Although  $n$-Lie coalgebra  $(L, \Delta_1)$ is isomorphic to $n$-Lie coalgebra  $(L, \Delta_1)$, but it may leads to a different $n$-Lie bialgebra structures on $(L, \mu)$, that is, the $n$-Lie bialgebras   $(L, \mu, \Delta_1)$  and
$(L, \mu, \Delta_2)$ may not be equivalent.

\begin{exam}
Let $(L, \mu)$ be an $(n+1)$-dimensional $n$-Lie algebra with a basis $x_{1},x_{2},\cdots,x_{n+1},$ $(L, \Delta_i)$ for $i=1, 2, 3$ be $n$-Lie coalgebras, where  the multiplication
 $\mu:  L^{\wedge n} \rightarrow L$,  and $\Delta_i: L\rightarrow L^{\wedge n} $ are as follows: $ \left\{
\begin{array}{l}
\mu(x_{2},x_{3},\cdots,x_{n+1})=x_{1},\\
\mu(x_{1},x_{3},\cdots,x_{n+1})=x_{2}.
\end{array}
\right.
$

\vspace{2mm}$\left\{
\begin{array}{l}
\Delta_{1}(x_{1})=x_{1}\wedge x_{3}\wedge x_{4}\wedge\cdots\wedge x_{n+1},\\
\Delta_{1}(x_{2})=x_{2}\wedge x_{3}\wedge x_{4}\wedge\cdots\wedge x_{n+1},\\
\Delta_{1}(x_{j})=0,j\geq 3;
\end{array}
\right.
$

\vspace{2mm}$\left\{
\begin{array}{l}
\Delta_{2}(x_{1})=x_{1}\wedge x_{2}\wedge x_{4}\wedge\cdots\wedge x_{n+1},\\
\Delta_{2}(x_{3})=x_{3}\wedge x_{2}\wedge x_{4}\wedge\cdots\wedge x_{n+1},\\
\Delta_{2}(x_{j})=0,j\neq 1,3;
\end{array}
\right.
$

\vspace{2mm}$\left\{
\begin{array}{l}
\Delta_{3}(x_{2})=x_{2}\wedge x_{1}\wedge x_{4}\wedge\cdots\wedge x_{n+1},\\
\Delta_{3}(x_{3})=x_{3}\wedge x_{1}\wedge x_{4}\wedge\cdots\wedge x_{n+1},\\
\Delta_{3}(x_{j})=0,j\neq 2,3.
\end{array}
\right.
$

\vspace{2mm}Define linear isomorphism  $\varphi_{12},\varphi_{13},\varphi_{23}:L\rightarrow L$ by
$$\varphi_{12}(x_{1})=x_{1},\varphi_{12}(x_{2})=x_{3},\varphi_{12}(x_{3})=x_{2},\varphi_{12}(x_{i})=x_{i}, \quad 4\leq i\leq n+1$$
$$\varphi_{13}(x_{1})=x_{2},\varphi_{13}(x_{2})=x_{3},\varphi_{13}(x_{3})=x_{1},\varphi_{13}(x_{i})=x_{i}, \quad 4\leq i\leq n+1$$
$$\varphi_{23}(x_{1})=x_{2},\varphi_{23}(x_{2})=x_{1},\varphi_{23}(x_{3})=x_{3},\varphi_{23}(x_{i})=x_{i}, \quad 4\leq i\leq n+1$$

By a direct computation, we obtain that $\varphi_{12}: (L,\Delta_{1})\rightarrow (L,\Delta_{2})$,
\quad  $\varphi_{13}: (L,\Delta_{1})\rightarrow (L,\Delta_{3})$, and $\varphi_{23}: (L,\Delta_{2})\rightarrow (L,\Delta_{3})$ are $n$-Lie coalgebra isomorphism, respectively.
And $ (L, \mu, \Delta_{1})$, $ (L, \mu, \Delta_{2})$,  and $ (L, \mu, \Delta_{3})$ are $n$-Lie bialgebras.

Let $\sigma: L \rightarrow L$ be a linear isomorphism, and assume

$$\sigma(x_i)=\sum\limits_{j=1}^{n+1}a_{ij}x_j, ~~a_{ij}\in \mathbb F, \quad 1\leq i\leq n+1.$$
Then  $\sigma$ is an automorphism of $n$-Lie algebra $(L, \mu)$ if and only if $\sigma$ satisfies that

$$\sigma\left(
      \begin{array}{cccccccccccc}
      x_1 \\
      x_2  \\
     x_3  \\
      x_4  \\
      \vdots  \\
      x_{(n+1)}  \\
      \end{array}
    \right)
=\left(
      \begin{array}{cccccccccccc}
      a_{11} & a_{12} & 0 & 0  & \cdots & 0 \\
      da_{12} & da_{11} & 0  & 0 & \cdots  & 0 \\
      a_{31} & a_{32} & a_{33} & a_{34}  &  \cdots & a_{3(n+1)} \\
       a_{41} & a_{42} & a_{43} & a_{44}  &  \cdots & a_{4(n+1)} \\
       \vdots & \vdots & \vdots & \vdots & \vdots  & \vdots \\
       a_{(n+1)1} & a_{(n+1)2} & a_{(n+1)3} & a_{(n+1)4} &  \cdots  & a_{(n+1)(n+1)} \\
      \end{array}
    \right)\left(
      \begin{array}{cccccccccccc}
      x_1 \\
      x_2  \\
     x_3  \\
      x_4  \\
      \vdots  \\
      x_{(n+1)}  \\
      \end{array}
    \right),$$
that is,
$a_{1k}=a_{2k}=0, ~~~  3\leq k\leq n+1;$ \quad $a_{11}=da_{22}, ~~ a_{12}=d a_{21}, $ where $a_{11}^2-a_{12}^2\neq 0,$
$$ d=\begin{vmatrix}
a_{3 3} &\cdots & a_{3 n+1}\\
\vdots &      & \vdots\\
a_{n+1 3} &\cdots & a_{n+1 n+1}\\
\end{vmatrix}=1 ~~ \text{or} ~~ -1.$$

In the case
$$
\sigma(x_1)=x_2, \sigma(x_2)=x_1, \sigma(x_l)=x_l, \quad \forall 3\leq l\leq n+1,
$$
(that is, $a_{12}= a_{21}=a_{ll}=1, $ for $3\leq l\leq n+1$, and others are zero), $\sigma$ is an isomorphism of $n$-Lie coalgebra $(L, \Delta_2)$ and  $(L, \Delta_3)$. Therefore, $n$-Lie bialgebras   $ (L, \mu, \Delta_2)$ and $ (L, \mu, \Delta_3)$ are equivalent.

We will prove that $n$-Lie bialgebras $(L, \mu, \Delta_{2})$ and $(L, \mu, \Delta_{1})$ are non-equivalent.

In fact,  if  $n$-Lie bialgebras $(L, \mu, \Delta_{2})$ and $(L, \mu, \Delta_{1})$ are equivalent, then $\sigma$ is an automorphism of
$n$-Lie algebra $(L, \mu)$, and for all $k\geq 3$, we have

 $\Delta_2(\sigma(x_k))=\Delta_2(\sum\limits_{j=1}^{n+1}a_{kj}x_j)=a_{k1}\Delta_2(x_1)+a_{k3}\Delta_2(x_3)$
 \\
 $=a_{31} x_{1}\wedge x_{2}\wedge x_{4}\wedge\cdots\wedge x_{n+1}+a_{33} x_{3}\wedge x_{2}\wedge x_{4}\wedge\cdots\wedge x_{n+1}
 $
 \\
 $=\sigma^n(\Delta_1(x_3))=0.$

\vspace{2mm}\noindent We obtain that $a_{k1}=a_{k3}=0$ for all $k\geq 3$, therefore, $ d=\begin{vmatrix}
a_{3 3} &\cdots & a_{3 n+1}\\
\vdots &      & \vdots\\
a_{n+1 3} &\cdots & a_{n+1 n+1}\\
\end{vmatrix}=0.$ This contradicts the invert of the automorphism $\sigma$.

Therefore, for all $n$-Lie algebra automorphism $\sigma$ of $(L, \mu)$,  $\sigma: (L, \Delta_1) \rightarrow (L, \Delta_2)$ is not a isomorphism of $n$-Lie coalgebra.

\end{exam}

\section{ Two -dimensional extension of   $n$-Lie bialgebras}
\mlabel{sec:bialg}

In this section, we construct $(n+1)$-Lie bialgebras from a known $n$-Lie bialgebra by two dimensional extensions \cite{BWJ1}.

Let $(L, \mu)$ be an $n$-Lie algebra with a basis $x_1, \cdots, x_m$, $B: L\otimes L\rightarrow \mathbb F$ be a non-degenerate bilinear  symmetric
function satisfying $ad_{\mu}$-invariant, that is, for all $y_1, \cdots, y_{n-1}, , x, z\in L, $
\begin{equation}\label{eq:metric}
B(\mu(y_1, \cdots, y_{n-1}, x), z)+B(x, \mu(y_1, \cdots, y_{n-1}, z))=0.
\end{equation}
The triple $(L, \mu, B)$ is called a metric $n$-Lie algebra \cite{BWL1}.

\begin{lemma} \cite{BWJ1}\label{lem:Lexten}
 Let  $(L,\mu)$ be an $m$-dimensional
$n$-Lie algebra with a basis $x_1,$ $ \cdots,$ $ x_m$. $B: L\otimes L\rightarrow \mathbb F$ be a non-degenerate bilinear  symmetric
function satisfying $ad_{\mu}$-invariant. Then
 $(\bar{L}=L\oplus \mathbb F x_0\oplus
\mathbb F x_{-1}, \bar{\mu})$ is an $(n+1)$-Lie algebra,
where $\bar{\mu}$ is  defined as follows,  for all   $1\leq k\leq n+1$,

\begin{equation}\label{eq:ex}
 \bar{\mu}(x_{i_1}, ~\ldots,\underbrace{ x_0}_k, \ldots,
~x_{i_n})=(-1)^{k-1}\mu(x_{i_1}, \ldots, x_{i_n}), ~1\leq i_1,
\ldots, i_{n}\leq m;
\end{equation}

\begin{equation}\label{eq:ext}
\bar{\mu}(x_{i_1}, ~\ldots,\underbrace{ x_{-1}}_k, \ldots, ~x_{i_n})=0,
~~ ~ 0\leq  i_1, \ldots, i_n\leq m;
\end{equation}

\begin{equation}\label{eq:exte}
\bar{\mu}(x_{i_1}, ~ \ldots, ~ x_{i_n}, ~ x_{i_{n+1}})=B(\mu(x_{i_1},
\ldots, x_{i_n}), x_{i_{n+1}})x_{-1}, ~1\leq i_1, \ldots,
i_{n+1}\leq m.
\end{equation}

\vspace{3mm} $\bar{B}: \bar{L}\otimes \bar{L} \rightarrow
\mathbb F$ defined as:  for all $x, y\in L$,

\begin{equation}\label{eq:exten}
 \bar{B}(x, y)=B(x, y),   ~ \bar{B}(x_0,
x_0)=1, \bar{B}(x_{-1}, x_0)=(-1)^{n-1},
\end{equation}
$$\bar{B}(x_{-1}, y)=\bar{B}(x_{-1}, x_{-1})=\bar{B}(x_0, y)=0,$$
is ad$_{\bar\mu}$-invariant, and $\bar B$ is non-degenerate if and only if $B$ is non-degenerate.

\end{lemma}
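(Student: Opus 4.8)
The plan is to verify Lemma \ref{lem:Lexten} by checking directly that $\bar\mu$ is totally skew-symmetric, that it satisfies the Filippov identity of order $n+1$, that $\bar B$ is $ad_{\bar\mu}$-invariant, and finally that non-degeneracy of $\bar B$ is equivalent to that of $B$. The skew-symmetry of $\bar\mu$ is almost immediate from the defining formulas Eq.\eqref{eq:ex}--\eqref{eq:exte}: swapping two arguments from $L$ changes the sign by skew-symmetry of $\mu$ (resp. by $ad_{\mu}$-invariance of $B$ in the case of Eq.\eqref{eq:exte}), swapping $x_0$ past an argument of $L$ introduces exactly the sign $(-1)$ that the factor $(-1)^{k-1}$ in Eq.\eqref{eq:ex} is designed to track, and any product with two equal arguments — in particular any product involving $x_{-1}$ together with $x_0$ or with a repeated basis vector — vanishes by Eq.\eqref{eq:ext} and the fact that $x_{-1}$ is central. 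So the only real content is the Filippov identity and the invariance of $\bar B$.

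For the Filippov identity one takes $n$ arguments $X_{1},\dots,X_{n}$ and evaluates $\bar\mu(X_1,\dots,X_{n-1},\bar\mu(Y_1,\dots,Y_{n+1}))$ against the alternating sum $\sum_{s}(-1)^{n+1-s}\bar\mu(Y_1,\dots,\widehat{Y_s},\dots,Y_{n+1},\bar\mu(X_1,\dots,X_{n-1},Y_s))$, and splits into cases according to how many of the $2n$ slots are occupied by $x_0$ and $x_{-1}$. Since $\bar\mu$ vanishes the moment $x_{-1}$ appears, and since $\bar\mu(\ldots)$ lands in $L\oplus\mathbb F x_{-1}$ — so the \emph{outer} bracket sees an argument in $L$ or $x_{-1}$ — the genuinely nontrivial cases are: (i) all arguments lie in $L$, where Eq.\eqref{eq:exte} makes the inner brackets land in $\mathbb F x_{-1}$ and everything collapses to $0=0$ using centrality of $x_{-1}$; (ii) exactly one of the $2n$ entries is $x_0$ and the rest lie in $L$, which is the heart of the matter — here the $(-1)^{k-1}$ factors must be tracked carefully and the identity reduces, after cancelling signs, exactly to the order-$n$ Filippov identity for $\mu$ in Eq.\eqref{eq:jacobi} together with the $ad_{\mu}$-invariance Eq.\eqref{eq:metric} of $B$ (the latter enters when the free $x_0$ sits in the last slot so that Eq.\eqref{eq:exte} is triggered); (iii) one entry is $x_{-1}$, where both sides are manifestly $0$; (iv) two or more entries are $x_0$, where one uses skew-symmetry to move them adjacent and the corresponding term drops out. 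I would organize this as a short case analysis, carrying out case (ii) in full and dismissing the others in a line each.

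The $ad_{\bar\mu}$-invariance of $\bar B$, i.e. $\bar B(\bar\mu(Z_1,\dots,Z_n,X),Y)+\bar B(X,\bar\mu(Z_1,\dots,Z_n,Y))=0$, is handled the same way: when all of $Z_1,\dots,Z_n,X,Y$ lie in $L$ the left side is $2B(\mu(\ldots),\cdot)$-type and vanishes by Eq.\eqref{eq:metric} after noting $\bar B$ restricted to $L$ is $B$ and $\bar\mu$ restricted to $L^{\otimes n}$ produces an $x_{-1}$-component pairing trivially with $L$ under $\bar B$; when one of the $Z_i$ or one of $X,Y$ equals $x_0$, Eq.\eqref{eq:ex} and the pairings $\bar B(x_{-1},x_0)=(-1)^{n-1}$, $\bar B(x_0,x_0)=1$ in Eq.\eqref{eq:exten} must be matched against the sign $(-1)^{k-1}$, and again invariance of $B$ closes the computation; when $x_{-1}$ appears among the $Z_i$ the bracket is $0$, and when $X$ or $Y$ is $x_{-1}$ one uses that $\bar B(x_{-1},\cdot)$ is supported only on $x_0$ while $\bar\mu$ never outputs an $x_0$-component. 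Finally, non-degeneracy: in the ordered basis $x_1,\dots,x_m,x_0,x_{-1}$ the Gram matrix of $\bar B$ is block lower/upper-triangular with the Gram matrix of $B$ in the top-left $m\times m$ block and the invertible $2\times2$ block $\begin{pmatrix}1 & (-1)^{n-1}\\ (-1)^{n-1} & 0\end{pmatrix}$ (determinant $-1$) in the bottom-right, with a zero off-diagonal block since $\bar B(x_{-1},y)=\bar B(x_0,y)=0$ for $y\in L$; hence $\det\bar B=\pm\det B$, giving the equivalence. The main obstacle is bookkeeping in case (ii) of the Filippov identity — keeping the $(-1)^{k-1}$ position-dependent signs consistent when $x_0$ migrates between the inner and outer brackets — so I would fix once and for all the convention that moving $x_0$ from slot $k$ to slot $1$ costs $(-1)^{k-1}$ and reduce every subcase to the standard form of Eq.\eqref{eq:jacobi}.
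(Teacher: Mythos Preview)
The paper does not prove this lemma at all: it is quoted verbatim from \cite{BWJ1} and used as a black box, so there is no ``paper's own proof'' to compare your sketch against. That said, your direct-verification strategy is exactly the standard one and is correct in outline.

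A few points to tighten. First, a counting slip: for an $(n+1)$-Lie algebra the Filippov identity reads $\bar\mu(X_1,\dots,X_n,\bar\mu(Y_1,\dots,Y_{n+1}))=\sum_s(-1)^{n+1-s}\bar\mu(Y_1,\dots,\widehat{Y_s},\dots,Y_{n+1},\bar\mu(X_1,\dots,X_n,Y_s))$, with $n$ outer arguments and $n+1$ inner ones, hence $2n+1$ slots, not $X_1,\dots,X_{n-1}$ and $2n$ slots as you wrote. Second, your case (iv) is not disposed of by skew-symmetry alone: if one $x_0$ sits among the $X$'s and the other among the $Y$'s they are never adjacent in a single $\bar\mu$, and the identity in that sub-case reduces (after the sign bookkeeping you describe) precisely to the $n$-ary Filippov identity Eq.~\eqref{eq:jacobi} for $\mu$ --- so this sub-case belongs with your case (ii), not (iv). Third, in your case (ii) the $ad_\mu$-invariance of $B$ is actually needed when the single $x_0$ lies among the \emph{outer} arguments $X_1,\dots,X_n$: there the LHS vanishes (inner bracket is a multiple of $x_{-1}$), while the RHS becomes $\sum_s\omega(Y_1,\dots,D(Y_s),\dots,Y_{n+1})$ with $\omega(\cdot)=B(\mu(\cdot),\cdot)$ and $D=ad_\mu(X')$, and this sum vanishes by combining the derivation property of $D$ with Eq.~\eqref{eq:metric}. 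When the single $x_0$ lies among the $Y$'s, only centrality of $x_{-1}$ and a single matching term are needed. Your non-degeneracy argument via the block Gram matrix is fine; the bottom $2\times 2$ block has determinant $-1$, so $\det\bar B=-\det B$.
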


\begin{lemma}\label{lem:exten11}
 Let  $(L,\mu)$ be an $m$-dimensional
$n$-Lie algebra with a basis $x_1,$ $ \cdots,$ $ x_m$. Then
 $(\bar{L}=L\oplus \mathbb F x_0\oplus
\mathbb F x_{-1}, \bar{\mu})$ is an $(n+1)$-Lie algebra,
where $\bar{\mu}$ is  defined as  for all  $k$, $1\leq k\leq n+1$,

\begin{equation}\label{eq:2exten2} \bar{\mu}(x_{i_1}, ~\ldots,\underbrace{ x_0}_k, \ldots,
~x_{i_n})=(-1)^{k-1}\mu(x_{i_1}, \ldots, x_{i_n}), ~1\leq i_1,
\ldots, i_{n}\leq m;
\end{equation}

\begin{equation}\label{eq:3exten3}
\bar{\mu}(x_{i_1}, ~\ldots,\underbrace{ x_{-1}}_k, \ldots, ~x_{i_n})=0,
~~ ~ 0\leq  i_1, \ldots, i_n\leq m;
\end{equation}

\begin{equation}\label{eq:4exten4}
\bar{\mu}(x_{i_1}, ~ \ldots, ~ x_{i_n}, ~ x_{i_{n+1}})=0, ~1\leq i_1, \ldots,
i_{n+1}\leq m.
\end{equation}

\end{lemma}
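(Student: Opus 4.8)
The plan is to verify directly that the proposed multiplication $\bar\mu$ on $\bar L = L\oplus\mathbb F x_0\oplus \mathbb F x_{-1}$ satisfies the two defining axioms of an $(n+1)$-Lie algebra, namely skew-symmetry (Eq.\eqref{eq:skew1}) and the Filippov identity (Eq.\eqref{eq:jacobi}). This is in fact the special case of Lemma \ref{lem:Lexten} obtained by taking the bilinear form $B$ to be identically zero: then Eq.\eqref{eq:ex} becomes Eq.\eqref{eq:2exten2}, Eq.\eqref{eq:ext} becomes Eq.\eqref{eq:3exten3}, and Eq.\eqref{eq:exte} becomes Eq.\eqref{eq:4exten4} since $B(\mu(x_{i_1},\ldots,x_{i_n}),x_{i_{n+1}})x_{-1}=0$. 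However, since Lemma \ref{lem:Lexten} assumes $B$ non-degenerate, I would either (a) remark that the construction in Lemma \ref{lem:Lexten} is valid for an arbitrary $ad_\mu$-invariant symmetric $B$ (the non-degeneracy being used only for the statement about $\bar B$), and then set $B=0$; or (b) give the direct check below. I will present (b) for self-containedness.

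First I would record skew-symmetry: on arguments drawn from $L$ alone it is inherited from $\mu$; if one argument equals $x_{-1}$ the value is $0$ by Eq.\eqref{eq:3exten3} and transposing $x_{-1}$ past another entry only changes a sign, consistent with $0$; if one argument is $x_0$, swapping $x_0$ from slot $k$ to slot $k'$ multiplies $(-1)^{k-1}\mu(\ldots)$ by the sign of the corresponding permutation of the remaining $L$-entries times $(-1)^{k'-k}$, which matches Eq.\eqref{eq:2exten2}; and two $x_0$'s, or an $x_0$ and an $x_{-1}$, give $0$ on both sides. So $\bar\mu$ is totally skew-symmetric.

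Next I would verify the Filippov identity Eq.\eqref{eq:jacobi} for $\bar\mu$ by case analysis on how many of the $2n+1$ arguments $x_{i_1},\ldots,x_{i_n},x_{j_1},\ldots,x_{j_{n+1}}$ are equal to $x_0$ or $x_{-1}$. If $x_{-1}$ occurs among the arguments, then every term on both sides that contains $x_{-1}$ as an entry of some $\bar\mu$ vanishes by Eq.\eqref{eq:3exten3}, and since the inner product $\bar\mu(x_{i_1},\ldots,x_{i_n},x_{j_{n+1}})$ lies in $\mathbb F x_{-1}$ is actually $0$ here (Eq.\eqref{eq:4exten4}), the outer products also vanish; so both sides are $0$. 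If no $x_{-1}$ appears and no $x_0$ appears, then all inner brackets of the form $\bar\mu(x_{j_1},\ldots,x_{j_{n+1}})$ and $\bar\mu(x_{i_1},\ldots,x_{i_n},x_{j_s})$ land in $\mathbb F x_{-1}$ by Eq.\eqref{eq:4exten4}, hence are $0$, so again both sides vanish. The only substantive case is exactly one $x_0$ among the arguments: here the inner and outer brackets that involve $x_0$ reduce via Eq.\eqref{eq:2exten2} to $\pm\mu$ applied to $L$-entries (the sign being determined by the position of $x_0$), and one checks that the accumulated signs organize precisely into the $n$-Filippov identity Eq.\eqref{eq:jacobi} for $\mu$ on $L$, which holds since $(L,\mu)$ is an $n$-Lie algebra; if instead $x_0$ appears twice or more, the relevant brackets contain at least two $x_0$'s or an $x_0$ together with an $x_{-1}$, so both sides are again $0$.

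I expect the bookkeeping of signs in the single-$x_0$ case to be the only real obstacle: one must track the factor $(-1)^{k-1}$ from Eq.\eqref{eq:2exten2} as $x_0$ moves between the inner and outer bracket slots and reconcile it with the alternating signs $(-1)^{n-s}$ already present in Eq.\eqref{eq:jacobi}, raised now to $(n+1)$ arguments. This is a routine but slightly delicate shift-of-index computation, and it is handled in exactly the same way as the corresponding step of Lemma \ref{lem:Lexten} in \cite{BWJ1}; I would either reproduce that computation or simply cite it, noting that the terms involving $B$ there are the only ones that behave differently and they are absent here.
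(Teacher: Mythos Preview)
Your approach---direct verification of skew-symmetry and the Filippov identity, together with the observation that this is the $B=0$ degeneration of Lemma~\ref{lem:Lexten}---is exactly what the paper has in mind; its own proof is the single line ``The result follows from a direct computation.'' However, your case analysis for the Filippov identity contains a slip: you have the one-$x_0$ and two-$x_0$ cases swapped.

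With exactly one $x_0$ among the $2n+1$ arguments, both sides vanish trivially. Indeed, by Eqs.~\eqref{eq:2exten2}--\eqref{eq:4exten4} a bracket $\bar\mu$ on $n+1$ entries is nonzero only when exactly one entry is $x_0$ and the remaining $n$ lie in $L$. So with a single $x_0$ distributed between the $y$-group ($n$ entries) and the $z$-group ($n+1$ entries), on each side either the inner bracket or every outer bracket is applied to $n+1$ elements of $L$ and hence is $0$ by Eq.~\eqref{eq:4exten4}. The substantive case is \emph{exactly two} $x_0$'s, one in each group: then each surviving term carries precisely one $x_0$ in its inner bracket and one in its outer bracket, Eq.~\eqref{eq:2exten2} converts both to $\pm\mu$, and the sign bookkeeping you anticipate reduces the $(n+1)$-Filippov identity for $\bar\mu$ to the $n$-Filippov identity for $\mu$ on $L$. (On the right-hand side, the term in which the removed $z_s$ is itself $x_0$ vanishes, since its inner bracket then contains two copies of $x_0$.) Your claim that ``two or more $x_0$'s gives both sides $0$'' is valid only for three or more, or for two in the same group, where skew-symmetry forces $0=0$. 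Once you correct this swap, your argument goes through.
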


\begin{proof}

The result follows from a direct computation.
\end{proof}

\begin{theorem}\label{thm:biext}
Let $(L, \mu, \Delta)$ be an $m$-dimensional  $n$-Lie bialgebra, elements $x_0, x_{-1}$ be not contained in $L$, $B: L\otimes L \rightarrow \mathbb F$ be a bilinear symmetric function  satisfying ad$_{\mu}$-invariant. Then $(\bar L, \bar{\mu}, \bar \Delta)$ is an $(m+2)$-dimensional $(n+1)$-Lie bialgebra, where
$\bar L=L\oplus \mathbb F x_0\oplus \mathbb F x_{-1}, $ $\bar{\mu}$ is defined as Eqs. \eqref{eq:ex}, \eqref{eq:ext} and \eqref{eq:exte}, and
$(\bar \Delta)^*$ is defined as Eqs. \eqref{eq:2exten2}, \eqref{eq:3exten3} and \eqref{eq:4exten4}.

\end{theorem}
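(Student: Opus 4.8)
The plan is to verify in turn the three conditions of Definition \ref{def:bialg} for $(\bar L,\bar\mu,\bar\Delta)$. That $(\bar L,\bar\mu)$ is an $(n+1)$-Lie algebra is Lemma \ref{lem:Lexten} (its proof constructs $\bar\mu$ using only the symmetry and $ad_{\mu}$-invariance of $B$, so non-degeneracy is not needed here). That $(\bar L,\bar\Delta)$ is an $(n+1)$-Lie coalgebra I would obtain by duality: by Theorem \ref{thm:LandC} it suffices that $(\bar L^{*},(\bar\Delta)^{*})$ be an $(n+1)$-Lie algebra, and since $(L,\Delta)$ is an $n$-Lie coalgebra, $(L^{*},\Delta^{*})$ is an $n$-Lie algebra by Theorem \ref{thm:LandC}; by hypothesis $(\bar\Delta)^{*}$ is defined by Eqs.~\eqref{eq:2exten2}, \eqref{eq:3exten3}, \eqref{eq:4exten4}, i.e.\ $(\bar L^{*},(\bar\Delta)^{*})$ is precisely the one-step $(n+1)$-Lie-algebra extension of $(L^{*},\Delta^{*})$ given by Lemma \ref{lem:exten11}, hence an $(n+1)$-Lie algebra.

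The remaining and substantial point is that $\bar\Delta$ is a conformal $1$-cocycle on $(\bar L,\bar\mu)$ relative to $(\bar L^{\otimes(n+1)},\rho^{\bar\mu}_{s})$, $1\le s\le n+1$, i.e.\ that \eqref{eq:bialg1} holds for $(\bar\mu,\bar\Delta)$. First I would make $\bar\Delta$ explicit by dualizing the formula for $(\bar\Delta)^{*}$: since $(\bar\Delta)^{*}$ takes values in $L^{*}$, one gets $\bar\Delta(x_{0})=\bar\Delta(x_{-1})=0$ and, for $x\in L$,
$$\bar\Delta(x)=x_{-1}\wedge\Delta(x)\ \in\ \bar L^{\wedge(n+1)}.$$
Then I would test \eqref{eq:bialg1} on $(n+1)$-tuples of basis vectors, distinguishing cases by the occurrences of $x_{0}$ and $x_{-1}$, using throughout that $D:=\sum_{s=1}^{n+1}\rho^{\bar\mu}_{s}(y_{1},\ldots,y_{n})$ acts on $\bar L^{\wedge(n+1)}$ as a derivation, $D(z_{1}\wedge\cdots\wedge z_{n+1})=\sum_{a}z_{1}\wedge\cdots\wedge\bar\mu(y_{1},\ldots,y_{n},z_{a})\wedge\cdots\wedge z_{n+1}$.

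If $x_{-1}$ occurs among the arguments, $\bar\mu$ of the tuple is $0$ by \eqref{eq:ext} and both sides of \eqref{eq:bialg1} vanish, the right side because $\bar\Delta(x_{0})=\bar\Delta(x_{-1})=0$ and every $ad_{\bar\mu}$ with an $x_{-1}$-argument is $0$. If all arguments lie in $L$, then $\bar\mu$ of the tuple lies in $\mathbb F\,x_{-1}$ by \eqref{eq:exte}, so the left side is $0$; and on the right side, feeding $x_{-1}\wedge\Delta(y_{k})$ to the derivation $D_{k}$, the $x_{-1}$-slot contributes $\bar\mu(\cdots,x_{-1})=0$ while each $L$-slot contributes, again by \eqref{eq:exte}, another element of $\mathbb F\,x_{-1}$, so every term carries the factor $x_{-1}\wedge x_{-1}=0$ and the right side vanishes too. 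If exactly one argument is $x_{0}$ (say in position $c$, the others $y_{1},\ldots,y_{n}\in L$), the left side is $(-1)^{c-1}x_{-1}\wedge\Delta\mu(y_{1},\ldots,y_{n})$ by \eqref{eq:ex}; on the right side the $x_{0}$-term dies, and $D_{k}$ applied to $x_{-1}\wedge\Delta(y_{k})$ kills the $x_{-1}$-slot while, by \eqref{eq:ex}, converting the $L$-slot action into $x_{-1}\wedge\big(\sum_{s'}\rho^{\mu}_{s'}(y_{1},\ldots,\widehat{y_{k}},\ldots,y_{n})\Delta(y_{k})\big)$ up to sign, so after collecting signs the right side becomes $(-1)^{c-1}x_{-1}\wedge\big(\sum_{k}(-1)^{n-k}\sum_{s'}\rho^{\mu}_{s'}(y_{1},\ldots,\widehat{y_{k}},\ldots,y_{n})\Delta(y_{k})\big)$, which equals the left side by the cocycle identity \eqref{eq:bialg1} for the $n$-Lie bialgebra $(L,\mu,\Delta)$.

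The hard part will be this last case: keeping track of the sign from \eqref{eq:ex}, which depends both on the position $c$ of $x_{0}$ among the $n+1$ arguments and on which slot plays $\bar\mu$'s last argument, and matching it against the alternating signs in the $n$-Lie cocycle identity. The ``all arguments in $L$'' case is the other delicate point, but there everything collapses because $\bar\Delta$ always carries a factor $x_{-1}$ and $\bar\mu$ of elements of $L$ produces a second one. Once all cases are verified, $(\bar L,\bar\mu,\bar\Delta)$ satisfies Definition \ref{def:bialg} and hence is an $(m+2)$-dimensional $(n+1)$-Lie bialgebra.
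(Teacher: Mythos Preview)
Your proposal is correct and follows essentially the same approach as the paper: establish $\bar\Delta(x)=x_{-1}\wedge\Delta(x)$, $\bar\Delta(x_0)=\bar\Delta(x_{-1})=0$, and then verify the cocycle identity \eqref{eq:bialg1} by cases according to which of $x_{-1},x_0$ occur among the arguments, reducing the key case (one $x_0$, rest in $L$) to the $n$-Lie cocycle identity for $(L,\mu,\Delta)$. Your organization is slightly cleaner---you merge the paper's separate cases ``$x_{-1}$ present'' and ``$x_{-1},x_0$ both present'' into a single observation that any $ad_{\bar\mu}$ with an $x_{-1}$-argument vanishes---and you phrase $\bar\Delta$ coordinate-free rather than via structure constants, but the substance is identical.
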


\begin{proof}
Let $\{ x_{-1}, x_0, x_1, \cdots, x_m\}$ be a basis of $\bar L$, and  $\{ x^{-1}, x^0, x^1, \cdots, x^m\}$ be the dual basis of $(\bar L)^*$.
By Lemma \ref{lem:exten11} and Eq. \eqref{eq:CCL}, we   can suppose

\begin{equation}\label{eq:bardelt}
\bar \Delta (x_0)= \bar \Delta(x_{-1})=0, \quad  \bar{\Delta}(x_k)=\sum\limits_{j_1\cdots j_n}a^{j_1\cdots j_n}_kx_{-1}\wedge x_{j_1}\wedge\cdots\wedge x_{j_n}, 1\leq j_1<\cdots <j_n\leq m.
\end{equation}

Then we need to prove $\bar \mu$ and $\bar{\Delta}$ satisfying  Eq. \eqref{eq:bialg1}.  So we  dived it into four cases.

\vspace{2mm} {\bf (1)}. Let  $ j_1=-1,$ for all $  1\leq j_2, \cdots, j_{n+1}\leq m,$ by Lemma \ref{lem:Lexten}, we have

\vspace{3mm}
$ \bar{\Delta}\bar{\mu}(x_{-1}, x_{j_2}, \cdots, x_{j_{n+1}})=\bar{\Delta}(0)=0.$

\vspace{2mm}
$\sum\limits_{s=1}^{n+1}\sum\limits_{k=1}^{n+1}(-1)^{n+1-k}\rho^{\bar{\mu}}_s(x_{j_1},\cdots, \widehat{x_{j_k}}, \cdots, x_{j_{n+1}})\bar{\Delta}(x_{j_k})
$
\\
$
=(-1)^{n}\sum\limits_{s=1}^{n+1}\rho_s^{\bar{\mu}}(x_{j_2}, \cdots, x_{j_{n+1}})\bar{\Delta}(x_{-1})
$
$+\sum\limits_{s=1}^{n+1}\sum\limits_{k=2}^{n+1}(-1)^{n+1-k}\rho^{\bar{\mu}}_s(x_{-1}, x_{j_2},\cdots, \widehat{x_{j_k}}, \cdots, x_{j_{n+1}})\bar{\Delta}(x_{j_k})
$
\\
$
=\sum\limits_{s=1}^{n+1}\sum\limits_{k=2}^{n+1}(-1)^{n+1-k}\rho^{\bar{\mu}}_s(x_{-1}, x_{j_2},\cdots, \widehat{x_{j_k}}, \cdots, x_{j_{n+1}})\sum\limits_{1\leq l_1\cdots l_n\leq m}a^{l_1 \cdots l_n}_{j_k}
x_{-1}\wedge x_{l_1}\wedge \cdots \wedge x_{l_n}.
$

\vspace{2mm}Since for all $-1\leq t_1, \cdots, t_{n+1}\leq m,$

$\rho^{\bar{\mu}}_s(x_{-1}, x_{j_2},\cdots, \widehat{x_{j_k}}, \cdots, x_{j_{n+1}})(x_{t_1}\wedge \cdots \wedge x_{t_{n+1}})$
\\
$
=x_{t_1}\wedge\cdots \wedge x_{t_{s-1}}\wedge \bar{\mu}(x_{-1}, x_{j_2},\cdots, \widehat{x_{j_k}}, \cdots, x_{j_{n+1}}, x_{t_s})\wedge x_{t_{s+1}}\wedge \cdots x_{t_{n+1}})=0,
$

We have
$\sum\limits_{s=1}^{n+1}\sum\limits_{k=1}^{n+1}(-1)^{n+1-k}\rho^{\bar{\mu}}_s(x_{j_1},\cdots, \widehat{x_{j_k}}, \cdots, x_{j_{n+1}})\bar{\Delta}(x_{j_k})
$
$ =\bar{\Delta}\bar{\mu}(x_{-1}, x_{j_2}, \cdots, x_{j_{n+1}})=0.$

\vspace{2mm}{\bf (2)}. For $ j_1=-1,\quad j_2=0, \quad 1\leq j_3, \cdots, j_{n+1}\leq m,$
by Eq.\eqref{eq:ext},   $\bar{\Delta}(x_{-1})=\bar{\Delta}(x_0)=0$, then we have

\vspace{2mm}
$\sum\limits_{s=1}^{n+1}\sum\limits_{k=1}^{n+1}(-1)^{n+1-k}\rho^{\bar{\mu}}_s(x_{j_1},\cdots, \widehat{x_{j_k}}, \cdots, x_{j_{n+1}})\bar{\Delta}(x_{j_k})
$
\\
$
=(-1)^{n}\sum\limits_{s=1}^{n+1}\rho_s^{\bar{\mu}}(x_{j_2}, \cdots, x_{j_{n+1}})\bar{\Delta}(x_{-1})
$
$+(-1)^{n-1}\sum\limits_{s=1}^{n+1}\rho_s^{\bar{\mu}}(x_{-1}, x_{j_3}, \cdots, x_{j_{n+1}})\bar{\Delta}(x_{0})
$
\\
$+\sum\limits_{s=1}^{n+1}\sum\limits_{k=3}^{n+1}(-1)^{n+1-k}\rho^{\bar{\mu}}_s(x_{-1}, x_0, x_{j_3},\cdots, \widehat{x_{j_k}}, \cdots, x_{j_{n+1}})\bar{\Delta}(x_{j_k})
$
\\
$=\bar{\Delta}\bar{\mu}(x_{-1}, x_{0}, x_{j_3}\cdots, x_{j_{n+1}})$.

\vspace{2mm}{\bf(3)}.  For  $j_1=0,$ and  $ 1\leq j_2, \cdots, j_{n+1}\leq m,$ thanks to Eq. \eqref{eq:ex} and Eq.\eqref{eq:bardelt},

\vspace{3mm}
$ \bar{\Delta}\bar{\mu}(x_{0}, x_{j_2}, \cdots, x_{j_{n+1}})=\bar{\Delta}\mu(x_{j_2}, \cdots, x_{j_{n+1}})=\bar{\Delta}\Big(\sum\limits_{t=1}^mc_{j_2\cdots j_{n+1}}^tx_t\Big)$
$=\sum\limits_{t=1}^{m}c_{j_2\cdots j_{n+1}}^t\bar{\Delta}(x_t)$
\\
$=\sum\limits_{ l_1\cdots l_n}\sum\limits_{t=1}^ma_t^{l_1\cdots l_n} c_{j_2\cdots j_{n+1}}^t x_{-1}\wedge x_{l_1}\wedge\cdots \wedge x_{l_n}$
$=x_{-1}\wedge \Delta\mu(x_{j_2}, \cdots, x_{j_{n+1}})$.

\vspace{3mm}
$\sum\limits_{s=1}^{n+1}\sum\limits_{k=1}^{n+1}(-1)^{n+1-k}\rho^{\bar{\mu}}_s(x_{j_1},\cdots, \widehat{x_{j_k}}, \cdots, x_{j_{n+1}})\bar{\Delta}(x_{j_k})
$
\\
$
=(-1)^{n}\sum\limits_{s=1}^{n+1}\rho_s^{\bar{\mu}}(x_{j_2}, \cdots, x_{j_{n+1}})\bar{\Delta}(x_{0})
$
$+\sum\limits_{s=1}^{n+1}\sum\limits_{k=2}^{n+1}(-1)^{n+1-k}\rho^{\bar{\mu}}_s(x_{0}, x_{j_2},\cdots, \widehat{x_{j_k}}, \cdots, x_{j_{n+1}})\bar{\Delta}(x_{j_k})
$
\\
$
=\sum\limits_{s=1}^{n+1}\sum\limits_{k=2}^{n+1}(-1)^{n+1-k}\rho^{\bar{\mu}}_s(x_{0}, x_{j_2},\cdots, \widehat{x_{j_k}}, \cdots, x_{j_{n+1}})\Big(\sum\limits_{ l_1\cdots l_n}a^{l_1\cdots l_n}_{j_k}x_{-1}\wedge x_{l_1}\wedge\cdots \wedge x_{l_n}\Big)
$
\\
$
=\sum\limits_{s=2}^{n+1}\sum\limits_{k=2}^{n+1}(-1)^{n+1-k}\sum\limits_{l_1\cdots l_n}a^{l_1\cdots l_n}_{j_k}\rho^{\bar{\mu}}_s(x_{0}, x_{j_2},\cdots, \widehat{x_{j_k}}, \cdots, x_{j_{n+1}})(x_{-1}\wedge x_{l_1}\wedge\cdots \wedge x_{l_n})
$
\\
$
=\sum\limits_{s, k=2}^{n+1}(-1)^{n+1-k}\sum\limits_{l_1\cdots l_n}a^{l_1\cdots l_n}_{j_k}
(x_{-1}\wedge\cdots \wedge \bar{\mu}(x_{0}, x_{j_2},\cdots, \widehat{x_{j_k}}, \cdots, x_{j_{n+1}}, x_{l_{s-1}})\wedge
x_{l_{s}}\wedge\cdots\wedge x_{l_n})
$
\\
$
=\sum\limits_{s, k=2}^{n+1}(-1)^{n+1-k}\sum\limits_{l_1\cdots l_n}a^{l_1\cdots l_n}_{j_k}
(x_{-1}\wedge\cdots\wedge \mu( x_{j_2},\cdots, \widehat{x_{j_k}}, \cdots, x_{j_{n+1}}, x_{l_{s-1}})\wedge
x_{l_{s}}\wedge\cdots\wedge x_{l_n})
$
\\
$
=x_{-1}\wedge\Big(\sum\limits_{s, k=2}^{n+1}(-1)^{n+1-k}\rho_s^{\mu}(x_{j_2},\cdots, \widehat{x_{j_k}}, \cdots, x_{j_{n+1}})\Delta(x_{j_k})\Big)
$
\\
$
= \bar{\Delta}\bar{\mu}(x_{0}, x_{j_2}, \cdots, x_{j_{n+1}}),
$ where $1\leq l_1, \cdots, l_n\leq m.$ Therefore,

 $\bar{\Delta}\bar{\mu}(x_{0}, x_{j_2}, \cdots, x_{j_{n+1}})=\sum\limits_{s=1}^{n+1}\sum\limits_{k=1}^{n+1}(-1)^{n+1-k}\rho^{\bar{\mu}}_s(x_{j_1},\cdots, \widehat{x_{j_k}}, \cdots, x_{j_{n+1}})\bar{\Delta}(x_{j_k}).
$

\vspace{3mm} {\bf (4)}.  For all $1\leq j_1, \cdots, j_{n+1}\leq m,$ by  Eq.\eqref{eq:exte},

\vspace{2mm}
$\bar{\Delta}\bar{\mu}(x_{j_1}, \cdots, x_{j_{n+1}})=B\big(\mu(x_{j_1}, \cdots, x_{j_n}), x_{j_{n+1}}\big) \bar{\Delta}(x_{-1})=0.$

\vspace{3mm}
$\sum\limits_{s=1}^{n+1}\sum\limits_{k=1}^{n+1}(-1)^{n+1-k}\rho^{\bar{\mu}}_s(x_{j_1},\cdots, \widehat{x_{j_k}}, \cdots, x_{j_{n+1}})\bar{\Delta}(x_{j_k})
$
\\
 $=\sum\limits_{s=1}^{n+1}\sum\limits_{k=1}^{n+1}(-1)^{n+1-k}\sum\limits_{l_1\cdots l_n}a^{l_1\cdots l_n}_{j_k}\rho^{\bar{\mu}}_s(x_{j_1},\cdots, \widehat{x_{j_k}}, \cdots, x_{j_{n+1}})(x_{-1}\wedge x_{l_1}\wedge\cdots\wedge x_{l_n}),
$
\\where $1\leq l_1, \cdots, l_n\leq m.$

Since $\rho^{\bar{\mu}}_1(x_{j_1},\cdots, \widehat{x_{j_k}}, \cdots, x_{j_{n+1}})(x_{-1}\wedge x_{l_1}\wedge\cdots\wedge x_{l_n})=0,
$  and for $s\geq 2, $

$\rho^{\bar{\mu}}_s(x_{j_1},\cdots, \widehat{x_{j_k}}, \cdots, x_{j_{n+1}})(x_{-1}\wedge x_{l_1}\wedge\cdots\wedge x_{l_n})$
\\$
=x_{-1}\wedge x_{l_1}\wedge\cdots\wedge \bar{\mu}(x_{j_1},\cdots, \widehat{x_{j_k}}, \cdots, x_{j_{n+1}}, x_{l_{s-1}})\wedge x_{l_{s}}\wedge \cdots \wedge x_{l_n}
$
\\
$
=B\big(\mu(x_{j_1},\cdots, \widehat{x_{j_k}}, \cdots, x_{j_{n+1}}, x_{l_s}\big)(x_{-1}\wedge x_{l_1}\wedge\cdots\wedge x_{l_{s-1}}\wedge x_{-1}\wedge x_{l_{s+1}}\wedge \cdots \wedge x_{l_n})=0,
$
\\we obtain that

$\bar{\Delta}\bar{\mu}(x_{j_1}, \cdots, x_{j_{n+1}})=\sum\limits_{s=1}^{n+1}\sum\limits_{k=1}^{n+1}(-1)^{n+1-k}\rho^{\bar{\mu}}_s(x_{j_1},\cdots, \widehat{x_{j_k}}, \cdots, x_{j_{n+1}})\bar{\Delta}(x_{j_k})=0.$

Therefore, $(\bar L, \bar{\mu}, \bar{\Delta})$ is an $(n+1)$-Lie bialgebra. The proof is complete.

\end{proof}

From Theorem \ref{thm:cobialg} and Theorem \ref{thm:biext}, we have the following result.

\begin{coro}\label{cor:2}

Let $(L, \mu, \Delta)$ be an  $n$-Lie bialgebra, elements $x_0, x_{-1}$ be not contained in $L$, $B: L^*\otimes L^* \rightarrow \mathbb F$ be a bilinear symmetric function  satisfying ad$_{\Delta^*}$-invariant.
Then $(\bar L, \bar{\mu}, \bar{\Delta}) $ is an $(m+2)$-dimensional $(n+1)$-Lie bialgebra, where $\bar L=L\oplus \mathbb F x_0\oplus \mathbb F x_{-1}$,
$(\bar L, \bar{\mu})$ is an $(m+2)$-dimensional $(n+1)$-Lie algebra in Lemma \ref{lem:exten11}, and  $((\bar L)^*, \bar{\Delta^*})$  is an $(m+2)$-dimensional $(n+1)$-Lie algebra
in Lemma \ref{lem:Lexten}.

\end{coro}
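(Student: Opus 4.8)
The plan is to obtain Corollary~\ref{cor:2} by a ``dualize -- extend -- dualize'' argument, applying Theorem~\ref{thm:cobialg} twice and Theorem~\ref{thm:biext} once, so that no new computation is needed beyond bookkeeping of dual spaces.

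First I would apply Theorem~\ref{thm:cobialg} to the $n$-Lie bialgebra $(L,\mu,\Delta)$, producing the dual $n$-Lie bialgebra $(L^*,\Delta^*,\mu^*)$, in which $\Delta^*$ plays the role of the $n$-Lie multiplication on $L^*$ and $\mu^*$ the role of the $n$-Lie comultiplication. By hypothesis $B:L^*\otimes L^*\to\mathbb F$ is symmetric and $ad_{\Delta^*}$-invariant, which is exactly the datum required to feed $(L^*,\Delta^*,\mu^*)$ together with $B$ into Theorem~\ref{thm:biext}. Applying that theorem yields an $(m+2)$-dimensional $(n+1)$-Lie bialgebra $(\overline{L^*},\overline{\Delta^*},\overline{\mu^*})$ on $\overline{L^*}=L^*\oplus\mathbb F x_0\oplus\mathbb F x_{-1}$, where, by the conclusion of Theorem~\ref{thm:biext}, $\overline{\Delta^*}$ is the metric extension of $\Delta^*$ given by Eqs.~\eqref{eq:ex}--\eqref{eq:exte} (Lemma~\ref{lem:Lexten}) while the dual $(\overline{\mu^*})^*$ is the extension given by Eqs.~\eqref{eq:2exten2}--\eqref{eq:4exten4} (Lemma~\ref{lem:exten11}).

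Next I would apply Theorem~\ref{thm:cobialg} once more, now to the $(n+1)$-Lie bialgebra $(\overline{L^*},\overline{\Delta^*},\overline{\mu^*})$, obtaining its dual $(n+1)$-Lie bialgebra $\big((\overline{L^*})^*,\,(\overline{\mu^*})^*,\,(\overline{\Delta^*})^*\big)$. Using the canonical identification $(\overline{L^*})^*\cong\bar L=L\oplus\mathbb F x_0\oplus\mathbb F x_{-1}$ --- legitimate because all spaces are finite-dimensional, with $x_0,x_{-1}$ identified with the vectors dual to $x_0,x_{-1}$ and $L^{**}$ with $L$ --- I would set $\bar\mu:=(\overline{\mu^*})^*$ and $\bar\Delta:=(\overline{\Delta^*})^*$. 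Under this identification $(\overline{\mu^*})^*$ is precisely the $(n+1)$-Lie algebra $\bar\mu$ of Lemma~\ref{lem:exten11} built from $(L,\mu)$, and by the double-dual identity $(\bar\Delta)^*=((\overline{\Delta^*})^*)^*=\overline{\Delta^*}$, which is precisely the metric $(n+1)$-Lie algebra of Lemma~\ref{lem:Lexten} built from $(L^*,\Delta^*)$ and $B$. Hence $(\bar L,\bar\mu,\bar\Delta)$ is an $(m+2)$-dimensional $(n+1)$-Lie bialgebra with exactly the description of $\bar\mu$ and $\bar{\Delta^*}$ asserted in the statement.

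The argument is short, and the only genuine point to watch is a bookkeeping one: the duality interchanges the two extension procedures. Theorem~\ref{thm:biext} places the metric extension (Lemma~\ref{lem:Lexten}) on the multiplication side and the trivial extension (Lemma~\ref{lem:exten11}) on the comultiplication side; after the final dualization it is $\bar\mu$ that becomes the trivial extension and $(\bar\Delta)^*$ the metric one, as required here. Thus the main (minor) obstacle is simply to state the finite-dimensional identifications $L^{**}\cong L$ and $(\overline{L^*})^*\cong\bar L$ precisely enough that ``the dual of the metric extension of $\Delta^*$'' is literally the comultiplication $\bar\Delta$ whose dual is claimed to be that metric $(n+1)$-Lie algebra; no estimate or nontrivial calculation is involved, since Theorems~\ref{thm:cobialg} and~\ref{thm:biext} carry all the structural content.
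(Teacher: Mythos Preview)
Your proposal is correct and follows exactly the route the paper intends: the paper states only that the corollary ``follows from Theorem~\ref{thm:cobialg} and Theorem~\ref{thm:biext}'', and your ``dualize -- extend -- dualize'' argument is precisely the unpacking of that sentence. Your attention to the bookkeeping (that dualizing swaps which side carries the metric extension of Lemma~\ref{lem:Lexten} and which carries the trivial extension of Lemma~\ref{lem:exten11}) is more explicit than anything the paper writes, but matches its intent.
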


In the case $n=2$, let $(L, \mu, \Delta)$ be an $m$-dimensional  Lie bialgebra with a basis $x_1, \cdots, x_m$, $B: L\otimes L \rightarrow \mathbb F$ be an $ad_{\mu}$-invariant symmetric bilinear form with $B(x_i, x_j)=b_{ij}\in \mathbb F$, $1\leq i, j\leq m$.
Suppose
$$
\mu(x_i, x_j)=\sum\limits_{k=1}^m c_{ij}^k x_k, ~~ \Delta(x_k)=\sum\limits_{1\leq i_1<i_2\leq m} a^{i_1i_2}_kx_{i_1}\wedge x_{i_2}, ~~ c_{ij}^k, a^{i_1i_2}_k\in \mathbb F.
$$

Then $3$-Lie algebra $(\bar L, \bar{\mu})$ in Lemma \ref{lem:Lexten} and $(\bar{L^*}, \bar{\Delta^*})$ in  Lemma \ref{lem:exten11}
with  the following multiplication

$$\bar{\mu}(x_{i_1}, x_{i_2}, ~x_{0})=\sum\limits_{l=-1}^m c_{i_1i_20}^lx_l=\mu(x_{i_1}, x_{i_2})=\sum\limits_{t=1}^mc_{i_1i_2}^tx_t, ~1\leq i_1,
\ldots, i_{n}\leq m;
$$

$$
\bar{\mu}(x_{i_1},x_{i_2}, x_{-1})=\sum\limits_{l=-1}^m c_{i_1i_2(-1)}^lx_l=0,
~~ ~ 0\leq  i_1, \ldots, i_n\leq m;
$$

$$
\bar{\mu}(x_{i_1}, x_{i_2} ~ x_{i_3})=\sum\limits_{l=-1}^m c_{i_1i_2i_3}^lx_l=B(\mu(x_{i_1},
x_{i_2}), x_{i_3})x_{-1}=\sum\limits_{t=1}^m c_{i_1i_2}^tb_{ti_3}x_{-1}, ~1\leq i_1, \ldots,
i_{n+1}\leq m.
$$
And for $-1\leq l, j_1, j_2\leq m,$ $ 1\leq i_1, i_2, t\leq m,$
$$
c_{j_1j_20}^{-1}=c_{j_1j_20}^{0}=0,
~~c_{i_1i_20}^{t}=c_{i_1i_2}^{t}, ~~  ~c_{j_1j_2(-1)}^{l}=0,
$$

$$
 ~c_{i_1i_2i_3}^{l}=0, \quad  0\leq l\leq m, \quad  ~c_{i_1i_2i_3}^{-1}=\sum\limits_{t=1}^mc_{i_1i_2}^tb_{ti_3}. ~
$$

$$\bar{\Delta^*}(x^{i_1}, x^{i_2}, ~x^{-1})=\sum\limits_{l=-1}^m a^{i_1i_2(-1)}_lx^l=\Delta^*(x_{i_1}, x_{i_2})=\sum\limits_{t=1}^ma^{i_1i_2}_tx^t, ~1\leq i_1,
\ldots, i_{n}\leq m;
$$

$$
\bar{\Delta^*}(x^{i_1},x^{i_2}, x^{0})=\sum\limits_{l=-1}^m a^{i_1i_20}_lx^l=0,
~~ ~ 0\leq  i_1, \ldots, i_n\leq m;
$$

$$
\bar{\Delta^*}(x^{i_1}, x^{i_2} ~ x^{i_3})=0, ~1\leq i_1, \ldots,
i_{n+1}\leq m.
$$

Therefore,

$$\bar{\Delta}(x_k)=\sum\limits_{-1\leq j_1j_2j_3\leq m}a_k^{j_1j_2j_3}x_{j_1}\otimes x_{j_2}\otimes x_{j_3}
=\sum\limits_{(-1)i_1i_2}a_{k}^{i_1i_2}x_{-1}\otimes x_{i_1}\otimes x_{i_2}, 1\leq i_1, i_2\leq m,
$$

$$
\bar{\Delta}(x_0)=0,
\bar{\Delta}(x_{-1})=\sum\limits_{-1\leq j_1j_2j_3\leq m}a_{-1}^{j_1j_2j_3}x_{j_1}\otimes x_{j_2}\otimes x_{j_3}=0.
$$

We obtain for all  $-1\leq l, j_1, j_2, j_3\leq m,$ $ 1\leq i_1, i_2, i_3, t\leq m,$

$$
a^{j_1j_2j_3}_{-1}=a^{j_1j_2j_3}_{0}=0, \quad a^{j_1j_2 0}_{l}=a^{j_1j_2 (-1)}_{0}=0,
$$
$$
a^{j_1j_2 (-1)}_{t}=a_t^{j_1j_2}, \quad
a^{i_1i_2i_3}_{t}=0, \quad 1\leq t\leq m.
$$

\section{ Structures of $n$-Lie bialgebra $(A_n, \mu, \Delta)$}
\mlabel{sec:simbialg}

Ling in \cite{L} proved that there exists only one simple $n$-Lie algebra over the field $\mathbb F$ of complex numbers, that is $(n+1)$-dimensional $n$-Lie algebra, is denoted by $(A_n, \mu)$, or simply is denoted by $A_n$.  In this section, we study  bialgebra structures on the simple  $n$-Lie algebra $(A_n, \mu)$ over the field $\mathbb F$ of complex numbers. First we give the classification theorem given in
paper \cite{BSZ1}.

\begin{lemma} \cite{BSZ1} \label{lem:classnl} Let $A$ be an
$(n+1)$-dimensional $n$-Lie algebra  and  $e_{1},$ $ e_{2},$
  $ \cdots,$ $ e_{n+1}$ be a basis of $A$ ($n\geq 3$). Then one and only one
  of the following possibilities holds up
to isomorphisms:

\noindent $(a)$ ~If $\dim A^{1}=0$, then $A$ is an
abelian $n$-Lie algebra.

\noindent  $(b)$ ~If $\dim A^{1}=1$ and let $A^{1} =
\mathbb F e_{1}$, then in the case  $A^{1}\subseteq Z(A)$,

$ (b_1). ~ \mu(e_{2}, \cdots, e_{n+1}) = e_{1}.$

In the case that $A^{1}$ is not contained in $Z(A)$,

$ (b_2). ~ \mu(e_{1}, \cdots, e_{n})=e_{1}.$

\noindent  $(c)$ ~If $\dim A^{1}=2$ and let $A^{1}= {\mathbb F} e_{1}+ {\mathbb F} e_{2}$, then

$(c_{1}). ~\mu( e_{2}, \cdots, e_{n+1}) = e_{1},
{[}e_{1}, e_{3}, \cdots, e_{n+1}] = e_{2};$

$ (c_{2}). ~~ \mu(e_{2}, \cdots, e_{n+1}) =\alpha e_{1}+ e_{2}, ~{[}e_{1}, e_{3}, \cdots, e_{n+1}] = e_{2};
$

$
 (c_{3}). ~~
\mu(e_{1}, e_{3}, \cdots, e_{n+1}) = e_{1}, {[}e_{2}, \cdots, e_{n+1}] = e_{2},$
where $~ \alpha \in \mathbb F$ and $ \alpha \neq 0.$

\noindent  $(d)$ ~If  $\dim A^{1}=r$, $ 3\leq r\leq
n+1$, let $A^{1}= {\mathbb F} e_{1}+ {\mathbb F} e_{2}+ \cdots + {\mathbb F} e_{r}$. Then

$
(d_r). ~\mu(e_1, \cdots, \hat{e}_i, \cdots, e_{n+1})=e_i, ~1\le i \le r,
$
 where symbol
$\hat{e}_i$
 means that  $e_{i}$ is omitted.

 In the case $r=n+1$, $A$ is the $(n+1)$-dimensional simple $n$-Lie algebra, which is denoted by  $(A_n, \mu),$ or simply $A_n$.

 \end{lemma}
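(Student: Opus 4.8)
Since Lemma~\ref{lem:classnl} is quoted from \cite{BSZ1}, I will only sketch the proof I would give; the plan is to reduce the whole classification to normal forms for a single $(n+1)\times(n+1)$ matrix. Fix a basis $e_1,\dots,e_{n+1}$ of $A$. Because $\mu$ is $n$-linear and alternating on an $(n+1)$-dimensional space, every bracket $\mu(e_{k_1},\dots,e_{k_n})$ of basis vectors is either $0$ (when two arguments coincide) or, up to sign, one of the $n+1$ ``top'' brackets $f_i:=\mu(e_1,\dots,\widehat{e_i},\dots,e_{n+1})$. Hence $\mu$ is completely encoded by the matrix $A=(a_{ik})$ with $f_i=\sum_k a_{ik}e_k$, and $A^1=\mathrm{span}\{f_1,\dots,f_{n+1}\}$, so $r=\dim A^1=\mathrm{rank}\,A$. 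A change of basis by $P\in GL_{n+1}$ transforms the $f_i$ through the cofactor matrix of $P$ (the $\Lambda^n$-action), hence acts on $A$ by a determinant-weighted two-sided rule; the assertion of the lemma is that, among matrices $A$ satisfying the Filippov identity \eqref{eq:jacobi}, this action has exactly the orbits $(a)$, $(b_1)$, $(b_2)$, $(c_1)$, $(c_2)$, $(c_3)$, and $(d_3),\dots,(d_{n+1})$.

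The next step is to extract the constraints imposed by \eqref{eq:jacobi} on the entries $a_{ik}$, by substituting basis vectors into \eqref{eq:jacobi} with as much index overlap as the identity allows. This is exactly where $n\ge 3$ enters: then $n+1\ge 4$, so given any two indices there is always a third ``spare'' index available, and comparing the coefficients of the various $f_p$ on the two sides yields, whenever the brackets involved are nonzero, symmetry-type relations $a_{ab}=\pm a_{ba}$ together with a family of quadratic ``trace'' relations binding the diagonal entries $a_{ii}$ to the off-diagonal data. A useful corollary to record is that as soon as $r\ge 3$ there are enough indices in play that these relations force $A$ to be symmetric in a suitable basis and nondegenerate on $A^1$. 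Tracking the signs in this step is the technical heart of the argument, and the main obstacle.

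Armed with these relations I would run a case analysis on $r=\dim A^1$. For $r=0$ the algebra is abelian, case $(a)$. For $r=1$ the line $A^1=\mathbb F f$ and the single nonzero top bracket can be put in standard position by a basis change, and the quadratic relations split the possibilities precisely according to whether $A^1\subseteq Z(A)$, giving $(b_1)$, or not, giving $(b_2)$. For $r=2$ --- the most delicate stratum --- the relations still allow the restriction of $A$ to $A^1$ to be either a nondegenerate symmetric pairing or a non-semisimple ``Jordan-type'' shape; normalising over $\mathbb F$ produces $(c_1)$ and $(c_3)$ in the semisimple cases and the one-parameter family $(c_2)$ in the remaining case, where the scalar $\alpha\neq 0$ cannot be removed because of the determinant weight in the group action. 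For $r\ge 3$ the symmetry relations apply without restriction, so $A$ is symmetric and nondegenerate on the $r$-dimensional space $A^1$; over the complex numbers a nondegenerate symmetric bilinear form on an $r$-dimensional space is unique up to congruence, and the leftover basis freedom kills the complementary block, leaving the single family $(d_r)$ --- and $r=n+1$ is the simple $n$-Lie algebra $A_n$.

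Finally I would check irredundancy: $\dim A^1$, $\dim(A^1\cap Z(A))$, the nilpotency class, and whether the inner maps $ad_\mu$ act semisimply are isomorphism invariants separating all the listed families, while within $r=2$ the scalar $\alpha$ labels distinct orbits of $(c_2)$. Besides the sign bookkeeping in the second step, the only other point needing genuine care is establishing both the completeness and the pairwise non-isomorphism of the three families in the $r=2$ case.
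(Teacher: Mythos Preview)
The paper does not prove Lemma~\ref{lem:classnl}; it is quoted verbatim from \cite{BSZ1} and used as a black box in Section~\ref{sec:simbialg}. So there is no proof in the paper to compare your proposal against.

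That said, your outline is the standard route and matches what is done in \cite{BSZ1} and in Filippov's original paper \cite{F}: encode the bracket on an $(n+1)$-dimensional $n$-Lie algebra by the single matrix $A=(a_{ik})$ of the ``top'' brackets $f_i=\mu(e_1,\dots,\widehat{e_i},\dots,e_{n+1})$, extract from the Filippov identity the sign-adjusted symmetry relations on $A$ (this is precisely the content of Filippov's Theorem~3 invoked in the paper's Lemma~\ref{lem:simple}), and then run a rank-by-rank normal-form analysis. Your observation that $r\ge 3$ forces symmetry while $r\le 2$ allows the non-symmetric types $(b_2)$ and $(c_2)$ is exactly the dichotomy that drives the classification. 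One point to make explicit: the uniqueness of the family $(d_r)$ for $r\ge 3$ genuinely uses that $\mathbb{F}$ is algebraically closed (so that every nondegenerate symmetric form is congruent to the identity); you invoke this for $\mathbb{C}$, which is the ambient field of Section~\ref{sec:simbialg}, but the lemma as stated does not name that hypothesis, so in a self-contained write-up you should flag it.
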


Let  $\Delta: A\rightarrow  \underbrace{A \wedge \cdots \wedge A}_n $ be a linear mapping, set
$$\Delta (e_i)=\sum\limits_{j=1}^{n+1} a_{ij}~~ e_1 \wedge \cdots \wedge \widehat{e_j} \wedge \cdots \wedge e_{n+1}, ~~ 1\leq i\leq n+1,  \quad a_{ij}\in \mathbb F.$$

Suppose that $A^*$ is the dual space of $A$, and $e^1, \cdots, e^{n+1} $ is the dual basis of $e_1, \cdots, e_{n+1}$, that is, $\langle e^i, e_j\rangle=\delta_{ij}, 1\leq i, j\leq n+1,$ and  $\Delta^*: A^{*\otimes n} \rightarrow A^*$ is the dual mapping of $\Delta$. Then by Eq.\eqref{eq:CCL} we have
$$
\Delta^*(e^1, \cdots, \widehat{e^j}, \cdots, e^{n+1})=\sum\limits_{i=1}^{n+1} a_{ij} e^j, ~~ 1\leq j \leq n+1.
$$

Denote $$\bar{e}^j=(-1)^{n+j+1}\Delta^*(e^1, \cdots, \widehat{e^j}, \cdots, e^{n+1})=(-1)^{n+j+1}\sum\limits_{i=1}^{n+1} a_{ij} e^j=\sum\limits_{i=1}^{n+1} b_{ij} e^j,$$
that is, $b_{ij}=(-1)^{n+j+1}a_{ij}$, $1\leq i, j\leq n+1$. We obtain a matrix $B=(b_{ij})_{(n+1)\times (n+1)}$, and
$$
(\bar e^1, \cdots, \bar e^{n+1})=( e^1, \cdots,  e^{n+1}) B.
$$

\begin{lemma}\label{lem:simple}

$(A, \Delta)$ is an $n$-Lie coalgebra with $R(\Delta)\geq 3$ if and only if $B=(b_{ij})$ is a symmetric matrix with rank $R(B)\geq 3.$

\end{lemma}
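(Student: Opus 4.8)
The plan is to translate the $n$-Lie coalgebra condition on $(A,\Delta)$ into the $n$-Lie algebra condition on $(A^*,\Delta^*)$ via Theorem~\ref{thm:LandC}, and then to recognise that the structure constants of $(A^*,\Delta^*)$, after the sign normalisation $b_{ij}=(-1)^{n+j+1}a_{ij}$, are exactly those that appear in the classification Lemma~\ref{lem:classnl}. Concretely, with the basis $\bar e^1,\ldots,\bar e^{n+1}$ of $A^*$ defined by $(\bar e^1,\ldots,\bar e^{n+1})=(e^1,\ldots,e^{n+1})B$, the products $\Delta^*(e^1,\ldots,\widehat{e^j},\ldots,e^{n+1})=(-1)^{n+j+1}\bar e^j$ show that $\Delta^*$ is completely determined by $B$: for any $j$, $\Delta^*$ applied to the $j$-th "hat" tuple of the dual basis returns a scalar multiple of the $j$-th column vector $\sum_i b_{ij}e^i$. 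The first step is therefore to write out, in terms of $B$, the $n$-ary bracket on $A^*$ on all basis tuples (only the $n+1$ "omit one index" tuples give something nonzero, by skew-symmetry and since $\dim A^*=n+1$).

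Next I would impose the Filippov identity on $(A^*,\Delta^*)$, i.e.\ Eq.~\eqref{eq:jacobi} for this bracket. Because the only nonzero brackets of basis $n$-tuples are the $\Delta^*(e^1,\ldots,\widehat{e^j},\ldots,e^{n+1})$, evaluating the Filippov identity on $(n+1)$-tuples of basis elements reduces to a manageable system of quadratic relations among the $b_{ij}$. The key computation is to show this system is equivalent to the assertion that $B$ is symmetric: the off-diagonal Filippov relations force $b_{ij}=b_{ji}$, and conversely once $B$ is symmetric the bracket is (up to the sign normalisation) the bracket of a metric-type $n$-Lie algebra built from $B$, for which the Filippov identity is known to hold. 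Here I would lean on the fact, implicit in the classification, that an $(n+1)$-dimensional $n$-Lie algebra whose bracket is "omit-one-index valued" is governed by a symmetric bilinear form: writing $\mu(e_{\widehat 1},\ldots)=\sum b_{ij}e_i$, the $n$-Jacobi identity is exactly the symmetry condition on $(b_{ij})$ together with no further constraint. This is essentially the computation behind cases $(a)$–$(d)$ of Lemma~\ref{lem:classnl}.

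Finally I would match up the ranks. By definition $R(\Delta)=\dim (A^*)^1$, the dimension of the derived algebra of $(A^*,\Delta^*)$. Since $(A^*)^1$ is spanned by the nonzero brackets $\Delta^*(e^1,\ldots,\widehat{e^j},\ldots,e^{n+1})=(-1)^{n+j+1}\bar e^j$, and the $\bar e^j$ span exactly the column space of $B$, we get $\dim (A^*)^1=\operatorname{rank} B=R(B)$. Hence $R(\Delta)\geq 3\iff R(B)\geq 3$, and combined with the previous paragraph ($(A,\Delta)$ an $n$-Lie coalgebra $\iff$ $(A^*,\Delta^*)$ an $n$-Lie algebra $\iff$ $B$ symmetric) this gives the stated equivalence.

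The main obstacle is the middle step: carefully showing that the Filippov identity for the bracket determined by $B$ is equivalent to $B=B^{\mathsf T}$. One must pick the right index patterns in Eq.~\eqref{eq:jacobi}—typically taking the inner $n$-tuple to omit index $p$ and the outer $(n-1)$-tuple plus distinguished element to omit index $q$—and track the signs $(-1)^{n-s}$ and $(-1)^{n+j+1}$ through the expansion; the cancellations that survive are precisely $b_{pq}-b_{qp}=0$. I expect no conceptual difficulty beyond bookkeeping, since the $n+1=\dim A^*$ coincidence kills most terms, but the sign accounting is where care is needed.
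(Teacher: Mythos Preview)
Your proof plan contains a genuine gap in the middle step. You claim that the Filippov identity for the bracket determined by $B$ is equivalent to $B=B^{\mathsf T}$, with ``no further constraint''. This is false. Take $(A^*,\Delta^*)$ of type $(c_3)$ in Lemma~\ref{lem:classnl}, say $\Delta^*(\widehat{e^1})=e^2$ and $\Delta^*(\widehat{e^2})=e^1$. Then $a_{12}=a_{21}=1$ and all other $a_{ij}=0$, so $b_{12}=(-1)^{n+3}$ while $b_{21}=(-1)^{n+2}=-b_{12}$: the matrix $B$ is skew, not symmetric, yet $(c_3)$ \emph{is} an $n$-Lie algebra. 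Types $(b_2)$ and $(c_2)$ give further non-symmetric examples. So the implication ``Filippov $\Rightarrow B$ symmetric'' fails in general; it only holds under the additional hypothesis $R(B)\ge 3$, which you never invoke in that step.

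This is exactly why the lemma is stated as an equivalence between the two \emph{conjunctions} rather than as two separate equivalences. Your rank identification $R(\Delta)=R(B)$ in the third step is fine, and the converse direction ($B$ symmetric $\Rightarrow$ Filippov identity) is indeed straightforward. But for the forward direction you must carry out the Filippov computation and observe that the resulting quadratic relations among the $b_{ij}$ --- which are \emph{not} simply $b_{pq}=b_{qp}$ --- force symmetry only when the column space of $B$ has dimension at least $3$. The paper sidesteps this by invoking Theorem~3 of Filippov's original paper~\cite{F}, where this computation is done; if you want to reprove it, you need to work with the full system of relations and show that a rank~$\ge 3$ hypothesis kills the skew part of $B$, not that the relations are individually $b_{pq}-b_{qp}=0$.
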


\begin{proof}

The result follows from  Theorem 3 in paper \cite{F}, and Theorem \ref{thm:LandC} directly.

\end{proof}

\begin{theorem}\label{thm:class}
The triple $(A, \mu, \Delta)$ (or the pair $(A_n, \Delta)$ ) is an $n$-Lie bialgebra if and only if $R(\Delta)=0$ ($(A^*, \Delta^*)$  is abelian), or $R(\Delta)=2$ and  $(A^*, \Delta^*)$ is the $n$-Lie algebra of type $(c_3)$
in Lemma \ref{lem:classnl}
\end{theorem}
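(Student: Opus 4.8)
The plan is to work entirely on the dual side, exploiting Theorem~\ref{thm:LandC} and the structure constant form of the compatibility condition from Theorem~\ref{thm:bialgconst}. Fix the simple $n$-Lie algebra $(A_n,\mu)$, which by Lemma~\ref{lem:classnl}$(d_r)$ with $r=n+1$ has $\mu(e_1,\ldots,\widehat{e_i},\ldots,e_{n+1})=e_i$ for all $1\le i\le n+1$. A coalgebra structure $\Delta$ on $A_n$ is, by Theorem~\ref{thm:LandC}, the same as an $n$-Lie algebra structure $\Delta^*$ on $A_n^*$, which (again by Lemma~\ref{lem:classnl}) must be of one of the types $(a)$, $(b_1)$, $(b_2)$, $(c_1)$, $(c_2)$, $(c_3)$, or $(d_r)$ for $3\le r\le n+1$. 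So the theorem reduces to: among these possibilities for $\Delta^*$, exactly the abelian one ($R(\Delta)=0$) and type $(c_3)$ ($R(\Delta)=2$) are compatible with $\mu$ in the sense of Eq.~\eqref{eq:biset5}, and none of rank $1$, none of the other rank $2$ types, and none of rank $\ge 3$ are.

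First I would dispose of the rank $\ge 3$ case. Since $A_n$ is simple, $\mu$ is as above; I use Lemma~\ref{lem:simple}, which says $(A,\Delta)$ has $R(\Delta)\ge 3$ iff the associated matrix $B=(b_{ij})$ with $b_{ij}=(-1)^{n+j+1}a_{ij}$ is symmetric of rank $\ge 3$. The idea is to substitute the explicit $c^l_{i_1\cdots i_n}$ of $A_n$ (a sign times a Kronecker delta pattern, concretely $c^l_{i_1\cdots i_n}=\pm 1$ when $\{i_1,\ldots,i_n\}=\{1,\ldots,n+1\}\setminus\{l\}$ and $0$ otherwise) into the compatibility identity Eq.~\eqref{eq:biset5}, and read off a system of quadratic equations in the $a_{ij}$. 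Both sides collapse dramatically: the left side $\sum_l c^l_{i_1\cdots i_n}a_l^{j_1\cdots j_n}$ is essentially a single $a$-entry up to sign, and each term on the right is a product of one $a$-entry with a $\pm1$. Rewriting everything in terms of $B$, the resulting constraints force $B$ to have rank $\le 2$; this is the step that actually uses the simplicity of $\mu$ and is where the real work lies. I expect the cleanest route is to show directly from Eq.~\eqref{eq:biset5} that if $\Delta$ is compatible with the simple $\mu$, then the $n$-Lie algebra $(A^*,\Delta^*)$ cannot have derived algebra of dimension $\ge 3$, i.e.\ it falls under cases $(a)$, $(b_1)$, $(b_2)$, $(c_1)$, $(c_2)$, $(c_3)$.

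Next I would go through the remaining low-rank cases one at a time, in each case taking $\Delta^*$ in the normal form given by Lemma~\ref{lem:classnl}, translating back to $\Delta$ via Eq.~\eqref{eq:CCL}, and testing Eq.~\eqref{eq:biset5} against the simple $\mu$. For $R(\Delta)=0$ (abelian $\Delta^*$, so $\Delta=0$) both sides of Eq.~\eqref{eq:biset5} vanish trivially, giving an $n$-Lie bialgebra. For the rank-$1$ types $(b_1)$ and $(b_2)$ and the rank-$2$ types $(c_1)$ and $(c_2)$, I would exhibit a specific choice of indices $i_1<\cdots<i_n$, $j_1<\cdots<j_n$ for which Eq.~\eqref{eq:biset5} fails — since $\mu$ is the simple product, the left side picks out a particular nonzero $a$-entry forced by the normal form while the right side, being built from the few nonzero $\Delta^*$-constants, cannot match; crucially one may also invoke the automorphism action, since by Definition~\ref{defn:biequlent} we are free to apply $\mathrm{Aut}(A_n)$ to put $\Delta^*$ in normal form, and $\mathrm{Aut}(A_n)$ is essentially $GL$ of the $(n+1)$-dimensional space acting naturally, so "normal form up to isomorphism" is harmless. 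Finally, for type $(c_3)$ I would verify directly that Eq.~\eqref{eq:biset5} holds: with $\Delta^*$ of type $(c_3)$ the nonzero structure constants are explicit and few, and plugging them in against the simple $\mu$, both sides of Eq.~\eqref{eq:biset5} reduce to the same expression — this is the positive half of the statement and is a finite, if slightly tedious, check; Theorem~\ref{thm:cobialg} or Corollary~\ref{cor:1} may be used to halve the verification by symmetry between $\mu$ and $\Delta^*$. The main obstacle is the rank-$\ge 3$ elimination: one must argue uniformly in $n$ and in the rank $r$, and the bookkeeping of signs in Eq.~\eqref{eq:biset5} combined with the skew-symmetry relations on the $a^{i_1\cdots i_n}_l$ is the delicate part; the low-rank cases, by contrast, are each a short explicit computation.
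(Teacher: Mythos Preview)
Your proposal has a genuine gap at two linked points.

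First, the assertion that $\mathrm{Aut}(A_n)$ is ``essentially $GL$ of the $(n+1)$-dimensional space'' is false. The simple $n$-Lie algebra $A_n$ carries an invariant nondegenerate symmetric form (in the basis $e_1,\ldots,e_{n+1}$ it is the identity), and its automorphism group is (up to finite issues) $SO(n+1,\mathbb F)$, a proper subgroup of $GL(n+1,\mathbb F)$; already for $n=2$ one has $\mathrm{Aut}(\mathfrak{sl}_2)\cong PGL_2\cong SO_3$, which is $3$-dimensional, not $8$. Consequently you cannot in general move $\Delta^*$ to the normal forms of Lemma~\ref{lem:classnl} by the contragredient of an automorphism of $(A_n,\mu)$, and your case-by-case check of the normal forms against the fixed simple $\mu$ does not cover all $\Delta$ in a given isomorphism class. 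This undermines both the low-rank analysis and the positive verification for type $(c_3)$.

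Second, you describe the compatibility constraints obtained from Eq.~\eqref{eq:biset5} as ``a system of quadratic equations in the $a_{ij}$''. They are not: for fixed $c^l_{i_1\cdots i_n}$ both sides of Eq.~\eqref{eq:biset5} are linear in the $a$'s. The paper exploits exactly this. It plugs the simple structure constants directly into Eq.~\eqref{eq:bialg1} for each bracket $\mu(e_1,\ldots,\widehat{e_i},\ldots,e_{n+1})=e_i$ and reads off the \emph{linear} conditions
\[
a_{kk}=0,\qquad a_{ij}=(-1)^{i+j+1}a_{ji}\quad (1\le i\neq j\le n+1),
\]
which in terms of $b_{ij}=(-1)^{n+j+1}a_{ij}$ say precisely that $B$ is skew-symmetric. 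This single observation does all the work you were deferring: combined with Lemma~\ref{lem:simple} (rank $\ge 3$ forces $B$ symmetric) it rules out $R(\Delta)\ge 3$, and skew-symmetry alone forces even rank, eliminating $R(\Delta)=1$. No normalization of $\Delta^*$ and no case-by-case exclusion of $(b_1),(b_2),(c_1),(c_2)$ is needed; the remaining identification of the rank-$2$ skew $B$ with type $(c_3)$ is then a short direct check. Your overall strategy can be repaired by inserting this linear computation up front and dropping the automorphism argument.
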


\begin{proof}    Since $A_n$ is the simple $n$-Lie algebra, by Lemma \ref{lem:classnl},

\begin{align*}
\Delta\mu(x_2,\cdots,x_{n+1})=\Delta(x_1)=&\sum\limits_{k=1}^{n+1}a_{1k}x_1\wedge\cdots\wedge\widehat{x_k}\wedge\cdots\wedge x_{n+1}\\
=&a_{11}x_2\wedge\cdots\wedge x_{n+1}+\sum\limits_{k=2}^{n+1}a_{1k}x_1\wedge\cdots\wedge\widehat{x_k}\wedge\cdots\wedge x_{n+1}\\
=&a_{11}x_2\wedge\cdots\wedge x_{n+1}+\sum\limits_{k=1}^{n}a_{1k+1}x_1\wedge\cdots\wedge\widehat{x_{k+1}}\wedge\cdots\wedge x_{n+1},
\end{align*}
and
\begin{align*}
&\sum\limits_{s=1}^{n}\sum\limits_{k=1}^{n}(-1)^{n-k}\rho_{s}^{\mu}(x_2,\cdots,\widehat{x_{k+1}},\cdots,x_{n+1})\Delta(x_{k+1})\\
&=\sum\limits_{s=1}^{n}\sum\limits_{k=1}^{n}(-1)^{n-k}\rho_{s}^{\mu}(x_2,\cdots,\widehat{x_{k+1}},\cdots,x_{n+1})(\sum\limits_{r=1}^{n+1}a_{k+1r} x_1\wedge\cdots\wedge\widehat{x_r}\wedge\cdots\wedge x_{n+1})\\
&=\sum\limits_{s=1}^{n}\sum\limits_{k=1}^{n}(-1)^{n-k}\rho_{s}^{\mu}(x_2,\cdots,\widehat{x_{k+1}},\cdots,x_{n+1})(a_{k+11}x_2\wedge\cdots\wedge x_{n+1}
\\
&+\sum\limits_{r=2}^{n+1}a_{k+1r} x_1\wedge\cdots\wedge\widehat{x_r}\wedge\cdots\wedge x_{n+1})\\
&=\sum\limits_{k=1}^{n}(-1)^{k-1}a_{k+11}x_1\wedge\cdots\wedge\widehat{x_{k+1}}\wedge\cdots\wedge x_{n+1}\\
&+\sum\limits_{k=1}^{n}(-1)^{n-k}\sum\limits_{r=2}^{n+1}a_{k+1r}ad_{\mu}(x_2,\cdots,\widehat{x_{k+1}},\cdots,x_{n+1})(x_1)
\wedge\cdots\wedge\widehat{x_r}\wedge\cdots\wedge x_{n+1}\\
&+\sum\limits_{s=2}^{n}\sum\limits_{k=1}^{n}(-1)^{n-k}\rho_{s}^{\mu}(x_2,\cdots,\widehat{x_{k+1}},\cdots,x_{n+1})(\sum\limits_{r=2}^{n+1}a_{k+1r} x_1\wedge\cdots\wedge\widehat{x_r}\wedge\cdots\wedge x_{n+1})\\
&=\sum\limits_{k=1}^{n}(-1)^{k-1}a_{k+11}x_1\wedge\cdots\wedge\widehat{x_{k+1}}\wedge\cdots\wedge x_{n+1}\\
&+\sum\limits_{k=1}^{n}a_{k+1k+1}x_2\wedge\cdots\wedge x_{n+1}.
\end{align*}
Then  $\Delta\mu(x_2,\cdots,x_{n+1})=\sum\limits_{s=1}^{n}\sum\limits_{k=1}^{n}(-1)^{n-k}\rho_{s}^{\mu}(x_2,\cdots,\widehat{x_{k+1}},\cdots,x_{n+1})\Delta(x_{k+1}),$
if and only if

$$\left\{
\begin{array}{l}
a_{11}=\sum\limits_{k=2}^{n}a_{kk},\\
a_{1k}=(-1)^{k+1-1}a_{k1}, \quad for \quad 2\leq k\leq n+1.
\end{array}
\right.
$$

For all $2\leq i\leq n+1$, from

$\Delta\mu(x_1,\cdots,\widehat{x_i},\cdots, x_{n+1})=\Delta(x_i)=\sum\limits_{k=1}^{n+1}a_{ik}x_1\wedge\cdots\wedge\widehat{x_k}\wedge\cdots\wedge x_{n+1},$

and

\vspace{2mm}$
\sum\limits_{s=1}^{n}\sum\limits_{k=1}^{i-1}(-1)^{n-k}\rho_{s}^{\mu}(x_1,\cdots,\widehat{x_{k}},\cdots,\widehat{x_{i}},\cdots,x_{n+1})\Delta(x_k)$
\\
$+\sum\limits_{s=1}^{n}\sum\limits_{k=i}^{n}(-1)^{n-k}\rho_{s}^{\mu}(x_1,\cdots,\widehat{x_{i}},\cdots,\widehat{x_{k+1}},\cdots,x_{n+1})\Delta(x_{k+1})$
\\
$=\sum\limits_{s=1}^{n}\sum\limits_{k=1}^{i-1}(-1)^{n-k}\rho_{s}^{\mu}(x_1,\cdots,\widehat{x_{k}},\cdots,\widehat{x_{i}},\cdots,x_{n+1})
(\sum\limits_{r=1}^{n+1}a_{kr}x_1\wedge\cdots\wedge\widehat{x_r}\wedge\cdots\wedge x_{n+1})$
\\
$+\sum\limits_{s=1}^{n}\sum\limits_{k=i}^{n}(-1)^{n-k}\rho_{s}^{\mu}(x_1,\cdots,\widehat{x_{i}},\cdots,\widehat{x_{k+1}},\cdots,x_{n+1})
(\sum\limits_{r=1}^{n+1}a_{k+1r}x_1\wedge\cdots\wedge\widehat{x_r}\wedge\cdots\wedge x_{n+1})$
\\
$=\sum\limits_{s=1}^{n}\sum\limits_{k=1}^{i-1}(-1)^{n-k}\rho_{s}^{\mu}(x_1,\cdots,\widehat{x_{k}},\cdots,\widehat{x_{i}},\cdots,x_{n+1})
(\sum\limits_{r=1}^{i-1}a_{kr}x_1\wedge\cdots\wedge\widehat{x_r}\wedge\cdots\wedge x_{n+1})$
\\
$+\sum\limits_{s=1}^{n}\sum\limits_{k=1}^{i-1}(-1)^{n-k}\rho_{s}^{\mu}(x_1,\cdots,\widehat{x_{k}},\cdots,\widehat{x_{i}},\cdots,x_{n+1})
(\sum\limits_{r=i}^{n+1}a_{kr}x_1\wedge\cdots\wedge\widehat{x_r}\wedge\cdots\wedge x_{n+1})$
\\
$+\sum\limits_{s=1}^{n}\sum\limits_{k=i}^{n}(-1)^{n-k}\rho_{s}^{\mu}(x_1,\cdots,\widehat{x_{i}},\cdots,\widehat{x_{k+1}},\cdots,x_{n+1})
(\sum\limits_{r=1}^{i-1}a_{k+1r}x_1\wedge\cdots\wedge\widehat{x_r}\wedge\cdots\wedge x_{n+1})$
\\
$+\sum\limits_{s=1}^{n}\sum\limits_{k=i}^{n}(-1)^{n-k}\rho_{s}^{\mu}(x_1,\cdots,\widehat{x_{i}},\cdots,\widehat{x_{k+1}},\cdots,x_{n+1})
(\sum\limits_{r=i}^{n+1}a_{k+1r}x_1\wedge\cdots\wedge\widehat{x_r}\wedge\cdots\wedge x_{n+1})$
\\
$
=\sum\limits_{k=1}^{i-1}a_{kk}x_1\wedge\cdots\wedge\widehat{x_i}\wedge\cdots x_{n+1}
+\sum\limits_{k=1}^{i-1}a_{ki}(-1)^{i-k-1}x_1\wedge\cdots\widehat{x_k}\wedge\cdots\wedge x_{n+1}+0$
 \\
$+\sum\limits_{k=i}^{n}a_{k+1k+1}x_1\wedge\cdots\wedge\widehat{x_i}\wedge\cdots\wedge x_{n+1}
+\sum\limits_{k=i}^{n}(-1)^{k+i}a_{k+1i}x_1\wedge\cdots\wedge\widehat{x_{k+1}}\wedge x_{n+1}$
\\
$
=\sum\limits_{k=1}^{i-1}a_{kk}x_1\wedge\cdots\wedge\widehat{x_i}\wedge\cdots x_{n+1}+\sum\limits_{k=i}^{n}a_{k+1k+1}x_1\wedge\cdots\wedge\widehat{x_i}\wedge\cdots\wedge x_{n+1}
$
\\
$+\sum\limits_{k=1}^{i-1}a_{ki}(-1)^{i-k-1}x_1\wedge\cdots\widehat{x_k}\wedge\cdots\wedge x_{n+1}
+\sum\limits_{k=i+1}^{n+1}a_{ki}(-1)^{k+i+1}x_1\wedge\cdots\widehat{x_k}\wedge\cdots\wedge x_{n+1}
$
\\
$
=\sum\limits_{k=1}^{i-1}a_{kk}x_1\wedge\cdots\wedge\widehat{x_i}\wedge\cdots x_{n+1}+\sum\limits_{k=i}^{n}a_{k+1k+1}x_1\wedge\cdots\wedge\widehat{x_i}\wedge\cdots\wedge x_{n+1}
$
\\
$+\sum\limits_{k=1}^{n}a_{ki}(-1)^{k-i-1}x_1\wedge\cdots\widehat{x_k}\wedge\cdots\wedge x_{n+1}.
$

\vspace{2mm} We obtain that

\vspace{2mm}$\Delta\mu(x_1,\cdots,\widehat{x_i},\cdots, x_{n+1})$
\\
$=\sum\limits_{s=1}^{n}\sum\limits_{k=1}^{i-1}(-1)^{n-k}\rho_{s}^{\mu}(x_1,\cdots,\widehat{x_{k}},\cdots,\widehat{x_{i}},\cdots,x_{n+1})\Delta(x_k)$
\\
$+\sum\limits_{s=1}^{n}\sum\limits_{k=i}^{n}(-1)^{n-k}\rho_{s}^{\mu}(x_1,\cdots,\widehat{x_{i}},\cdots,\widehat{x_{k+1}},\cdots,x_{n+1})\Delta(x_{k+1})$,~~
if and only if

$a_{ii}=\sum\limits_{k=1,k\neq i}^{n+1}a_{kk},$ \quad
$a_{ik}=(-1)^{k-i-1}a_{ki},$ \quad  for all $2\leq i\leq n+1, \quad 2\leq k\leq n+1.
$

Summarizing above discussion, we obtain that $\mu, \Delta $ satisfy Eq.\eqref{eq:bialg1} if and only if

\begin{equation}\label{eq:c}
a_{kk}=0, \quad  a_{ij}=(-1)^{i+j+1}a_{ji}, \quad \forall ~ 1\leq k\leq n+1, \quad 1\leq i\neq j\leq n+1.
\end{equation}

Therefore, for all $~ 1\leq k\leq n+1, \quad 1\leq i\neq j\leq n+1, $

\begin{equation}\label{eq:bij}
b_{kk}=(-1)^{n+k+1}a_{kk}=0, \quad  b_{ij}=(-1)^{n+j+1}a_{ij}=(-1)^{n+j+1}(-1)^{i+j+1}a_{ji}=-b_{ji},
\end{equation}
\\ that is,  $B$ is a skew-symmetrix  matrix. Then  $R(B)\neq 1$, and from Lemma \ref{lem:simple}, $R(B)\leq 2$.
This shows that  there do not exist $n$-Lie bialgebra structures on the simple $n$-Lie algebra $A_n$ with  $R(\Delta)\geq 3$ and $R(\Delta)=1.$

By a direct computation,  the only $n$-Lie bialgebra structures on the simple $n$-Lie algebra $A_n$ are $R(\Delta)=0$,  or $R(\Delta)=2$, and in this case,   the $n$-Lie algebra $(A^*, \Delta^*)$ is the type $(c_3)$.

\end{proof}

\noindent
{\bf Acknowledgements. } The authors would like to thank Professor Chengming Bai for many
valuable suggestions.
The first author was supported in part by the Natural
Science Foundation (11371245) and the Natural
Science Foundation of Hebei Province (A2014201006).

\bibliography{}

\begin{thebibliography}{999999}




\bibitem{BGSHh} C. Bai, L. Guo, Y Sheng, Bialgebra , the  classical Yang-Baxter equation and Manin  triples for 3-Lie algebras,  arXiv:1604.05996v1 [math-ph].

    \bibitem{BCL} R. Bai, Y. Cheng, J, Li, 3-Lie  bialgebras, {\em Acta  Math. Scientia }, 2014, {\bf 34B} (2):513¨C522.

  \bibitem{BWJ1}  R. Bai, Y. Wu, J. Li and H. Zhou, Constructing $(n + 1)$-Lie algebras from n-Lie algebras, {\em J. Phys. A: Math. Theor.} 2012, 45, 475206 (10pp).

   \bibitem{BWL1}  R. Bai,   W. Wu,  Z. LI, Some Results on Metric $n$-Lie Algebras, {\em Acta Math. Sinica}, English Series,  2012,  {\bf 28} (6): 1209-1220.

   \bibitem{BSZ1}  R. Bai, G. Song, Y. Zhang, On classification of $n$-Lie algebras, {\em Front. Math. China}, 2011, {\bf 6} (4):5 81-606.

       \bibitem{F}  V.T. Filippov, $n-$Lie algebras,  {\it Sib. Mat.
           Zh.,} {\bf 26} (1985): 126-140.

           \bibitem{Gbook} G. Gr¡§atzer, Universal Algebra, D. Van Nostrand Company, Inc. 1968

           \bibitem{G} S.A. Joni, G.C. Rota, Coalgebras and bialgebras in combinatories, Stud. Appl. Math. {\bf 61} (1979), 93-139.

\bibitem{L} W. Ling, On the structure of $n-$Lie
                  algebras, Dissertation, University-GHS-Siegen, Siegn, 1993.










\bibitem{Vbook} J. Loday and B. Vallette, Algebraic Operads, Springer, 2012.


\end{thebibliography}

\end{document}